\newtheorem{theorem}{Theorem}[section]
\newtheorem{proposition}{Proposition}[section]
\newtheorem{remark}{Remark}[section]
\newtheorem{lemma}{Lemma}[section]
\newtheorem{example}{Example}[section]
\newtheorem{definition}{Definition}
\def\bt{\begin{theorem}}
\def\et{\end{theorem}\bigskip}
\def\bt{\begin{Remark}}
\def\et{\end{Remark}\bigskip}
\def\bl{\begin{Lemma}}
\def\el{\end{Lemma}\bigskip}
\def\ep{\end{Proposition}\bigskip}
\def\bp{\begin{Proposition}}
\def\bd{\begin{definition}}
\def\ed{\end{definition}}
\definecolor{red8}{rgb}{0.8,0.0,0.8}
\definecolor{red5}{rgb}{0.5,0.0,0.5}
\definecolor{red6}{rgb}{0.6,0.0,0.6}
\definecolor{red7}{rgb}{0.7,0.0,0.7}
\definecolor{red9}{rgb}{0.9,0.0,0.9}
\definecolor{green2}{rgb}{0.0,0.2,0.0}
\definecolor{green3}{rgb}{0.0,0.3,0.0}
\definecolor{green4}{rgb}{0.0,0.4,0.0}
\definecolor{blue4}{rgb}{0.4,0.0,1.0}
\definecolor{yellow4}{rgb}{0.9,0.8,0.5}
\definecolor{blue5}{rgb}{0.0,0,0.5}
\definecolor{blue6}{rgb}{0.0,0.5,0.7}
\definecolor{hong1}{rgb}{0.8,0.8,0.8}
\newcommand{\I}{{\cal I}}
\begin{document}

\title{\bf The Sparse Solution to $\mathcal{KS}$-Tensor Complementarity Problems}

\author{Jingjing Sun\footnote{School of Mathematics and Statistics,
Qingdao University, Qingdao,  266071, China.}\qquad Shouqiang Du\footnote{School of Mathematics and Statistics,
Qingdao University, Qingdao,  266071, China.}\qquad Yuanyuan Chen\footnote{School of Mathematics and Statistics,
Qingdao University, Qingdao,  266071, China.}\qquad Yimin Wei\footnote{Corresponding author. School of Mathematical Science and Shanghai Key Laboratory of Contemporary Applied Mathematics,
Fudan University, Shanghai,  610101, This author is supported by the National Natural Science Foundation of China under grant 11771099 and  Innovation Program of Shanghai Municipal Education Commission, China. E-mail: ymwei@fudan.edu.cn and yimin.wei@gmail.com}}

\date{}
\maketitle

\begin{abstract} In view of the $\mathcal{KS}$-tensor complementarity problem, the sparse solution of this problem is studied. Due to the nonconvexity and noncontinuity of the $l_0$-norm, it is a NP hard problem to find the sparse solution of the $\mathcal{KS}$-tensor complementarity problem. In order to solve this problem, we transform it into a polynomial programming problem with constraints. Then we use the sequential quadratic programming (SQP) algorithm to solve this transformed problem. Numerical results show that the SQP algorithm can find the sparse solutions of the $\mathcal{KS}$-tensor complementarity problem effectively.

{\bf Keywords.} $\mathcal{KS}$-tensor; tensor complementarity problem; sparse solution; SQP algorithm

{\bf AMS Subject Classification.} 15A69, 90C33.

\end{abstract}

\newpage

\section{Introduction}

The complementarity problem is a classical and important research topic in the optimization field. The study on applications of the complementarity problem is an important issue in the engineering and the traffic balance problem \cite{CPS,GP,PSS,FP,HXQ,LHH,KTX}. Tensor complementarity problem (TCP) was firstly introduced in \cite{SQ15}. It is a subset of nonlinear complementarity problem (NCP) \cite{FP,HXQ}. TCP is widely used in the fields of the $n$-person non-cooperative game theory, the hypergraph clustering and related problems in recent years \cite{{HQ},{CZ},{HQIII},{WDm}}.

Recently, many kinds of TCP with different structured tensors have been studied, such as the $\mathcal{Z}$-tensor \cite{GLQX}, the positive-definite tensor \cite{CQW}, the $\mathcal{P}$-tensor, the strong $\mathcal{P}$-tensor \cite{BHW} and the copositive tensor \cite{YLH}. More theoretical results of the TCP can be found in \cite{SQ15,GLQX,SY,DDW,SQ17,DLQ,BP,CQS,ZW,LLV,HV,HQI,HQII} and the references therein. Just recently, a new class of tensor called $\mathcal{KS}$-tensor was presented in \cite{WMCW}. It is a subclass of $\mathcal{P}$-tensor and a generalization of $\mathcal{H^{+}}$-tensor. We naturally have the following questions. Can we investigate the properties of solution set for TCP with $\mathcal{KS}$-tensor? Can the solution set of $\mathcal{KS}$-tensor complementarity problems also have nice properties? We will answer the given questions in the following of this paper. Now, we firstly propose the $\mathcal{KS}$-tensor complementarity problem ($\mathcal{KS}$-TCP), which is to find a vector $x\in\mathbb{R}^{n}$ satisfying
\begin{equation}\label{kstcp}
\mathcal{A}x^{m-1}-q\geq0,~x\geq0,~x^{\top}(\mathcal{A}x^{m-1}-q)=0,
\end{equation}
where $q\in{\mathbb{R}^{n}}$ and $\mathcal{A}$ is a $\mathcal{KS}$-tensor.

On the other hand, there are many research results about the sparse solutions of the optimization problem, such as \cite{LQKX,SZX,LK,LQX,LLX,XLX}.
As for the TCP, we know that the research work about sparse solution of the TCP was only reported in \cite{LQX,XLX}. We will consider the sparse solution of $\mathcal{KS}$-TCP, which is also our motivation for this paper.

The main contributions of this paper are listed as follows.
\begin{enumerate}
  \item We study the properties of $\mathcal{KS}$-tensor, and introduce the differences and the connections between $\mathcal{KS}$-tensor and other structured tensors. We prove that the solution set of $\mathcal{KS}$-TCP is nonempty and compact.
  \item We transform the sparse solution problem of $\mathcal{KS}$-TCP into a polynomial programming problem with constraints.
  \item The SQP algorithm is proposed to find the sparse solution of $\mathcal{KS}$-TCP.
  \end{enumerate}

This paper is organized as follows. In Section 2, we recall some basic definitions and some properties about tensors and $Z$-function. In Section 3, we propose the difference properties and the connection properties between $\mathcal{KS}$-tensor and some kinds of structured tensors, and present the properties of the sparse solutions of $\mathcal{KS}$-TCP. In Section 4, we apply the SQP algorithm to solve the transformed optimization problem, and then obtain a sparse solution to $\mathcal{KS}$-TCP. In Section 5, numerical results of the sparse solution to $\mathcal{KS}$-TCP are given. Finally, We give our conclusions in Section 6.
\section{Preliminaries}
In this section, we introduce some  definitions and useful properties, which will be used in the following of this paper.

Throughout this paper, let $\mathbb{R}^n:=\{(x_1,x_2,\ldots,x_n)^{\top}: x_i\in \mathbb{R}, i\in[n]=1,2,\ldots,n\}$, where
$\mathbb{R}$ is the set of real numbers.
We use $\mathbb{R}^{n}_{+}$ and $\mathbb{R}^{n}_{++}$ to denote the set of  $n$-dimensional non-negative  and positive  vectors, respectively.
The definitions and computations of tensor have been introduced in \cite{WDm,QCCm,QLm}. Let $\mathcal{A}=(a_{i_1\cdots i_m})\in \mathbb{R}^{[m,n]}$ and $x\in \mathbb{R}^n$, where $\mathbb{R}^{[m,n]}$ is the set of all real tensors of order $m$ and dimension $n$. Then $\mathcal{A}x^{m-1}\in\mathbb{R}^n$ is a vector defined by
$$
(\mathcal{A}x^{m-1})_{i}:=\sum_{i_2,i_3,\cdots i_m=1}^na_{ii_2\cdots i_m}x_{i_2}x_{i_3}\cdots x_{i_m}, \forall i\in\lbrack n\rbrack.
$$

$\mathcal{A}\in \mathbb{R}^{[m,n]}$ is called {\it non-negative tensor} if each entry is non-negative \cite{XQW}. An $m$-order $n$-dimensional tensor $\mathcal{I}=(a_{i_1\ldots i_m})\in \mathbb{R}^{[m,n]}$ is called an {\it identity tensor} if\\
\begin{center}
$a_{i_1\cdots i_m}=\left\{\begin{array}{lc}1,&if\;i_1=\cdots=i_m,\\0,&otherwise.\end{array}\right.$
\end{center}

 $\mathcal{A}=(a_{i_1i_2\cdots i_m})\in \mathbb{R}^{[m,n]}$ is a {\it partially symmetric tensor} \cite{H17}, if each entry of $\bar{\mathcal{A}}=(\bar{a}_{i_1i_2\cdots i_m})$ satisfies

\begin{equation}\label{a-}
  \bar{a}_{i_1i_2\cdots i_m}=\frac1{(m-1)!}\sum_{\pi(i_2\cdots i_m)}a_{i_1\pi(i_2\cdots i_m)},\nonumber
\end{equation}
where the summation is over all the permutations $\pi(i_2\cdots i_m)$.

Moreover, for any $x\in \mathbb{R}^n$, the $l_0$-norm $\|x\|_0$ denotes the number of nonzero elements of $x\in \mathbb{R}^n$. $l_2$-norm $\|x\|_2$ is denoted by
$$
\|x\|_2=\sqrt{\sum_{i=1}^nx_i^2}.
$$
$x^{\top}$ represents the transpose of $x$. Given two  vectors $x,y\in \mathbb{R}^n$, $x^{\top} y= \left\langle x,y\right\rangle$ represents the inner product of $x$ and $y$.

Next, we recall the concepts of $\mathcal{P}$-tensor and $\mathcal{P}_0$-tensor \cite{SQ15}.

\begin{definition}\label{defp}
 Let $\mathcal{A}=(a_{i_1\cdots i_m})\in \mathbb{R}^{[m,n]}$, then $\mathcal{A}$ is defined as follows:
\begin{description}
  \item[(a)] a $\mathcal{P}$-tensor, if and only if for each  $x\in \mathbb{R}^n$, there exists an index $i\in[n]$ such that $x_i\neq 0$ and $x_i({\mathcal{A}}x^{m-1})_i\ge 0$.
  \item[(b)] a $\mathcal{P}_0$-tensor, if and only if for each  $x\in \mathbb{R}^n$, there exists an index $i\in[n]$ such that $x_i\neq 0$ and $x_i({\mathcal{A}}x^{m-1})_i>0$.
\end{description}
\end{definition}

From \cite{BHW}, we know that a tensor $\mathcal{A}\in\mathbb{R}^{[m,n]}$ is a nonsingular $\mathcal{P}$-tensor if $F(x):=\mathcal{A}x^{m-1}$ is a $P$-function on $\mathbb{R}^n$, i.e., for every $x,y\in \mathbb{R}^n$, $\underset{i\in\left[n\right]}{max}(x_i-y_i){(\mathcal{A}x^{m-1}-\mathcal{A}y^{m-1})}_i>0$.

\begin{definition}
Let $A\in \mathbb{R}^{n \times n}$. $A$ is called a $Z$-matrix, if all its off-diagonal entries are non-positive.
\end{definition}

\begin{definition}{\rm \cite{I}}
A mapping $F:   \mathbb{R}^n\rightarrow \mathbb{R}^n$ is said to be a $Z$-function if for every $x,y,z\in \mathbb{R}_+^n$ such that the inner product $\left\langle x,y-z\right\rangle=0$,  $\left\langle x,F(y)-F(z)\right\rangle\leq0$ holds.
\end{definition}

\begin{proposition}\label{lem1}{\rm \cite{LQX}}
A G$\hat{a}$teaux continuous differentiable function $F:\mathbb{R}^{n}\rightarrow \mathbb{R}^{n}$ is a $Z$-function if and only if for $\forall x\in \mathbb{R}^{n}_{+}$, $\nabla F(x)$ is a $Z$-matrix.
\end{proposition}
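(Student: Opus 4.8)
The plan is to prove the two implications of the equivalence separately; write $e_1,\dots,e_n$ for the standard basis of $\mathbb{R}^n$ and recall that $(\nabla F(x))_{ij}=\partial F_i(x)/\partial x_j$. For necessity, assume $F$ is a $Z$-function and fix $x^0\in\mathbb{R}^n_+$ together with two distinct indices $i,j\in[n]$. I would apply the defining inequality of a $Z$-function to the triple $u:=e_i$, $v:=x^0+te_j$, $w:=x^0$ with $t>0$: all three lie in $\mathbb{R}^n_+$ since $x^0\ge 0$, and $\langle u,v-w\rangle=t\langle e_i,e_j\rangle=0$ because $i\ne j$, so $0\ge\langle u,F(v)-F(w)\rangle=F_i(x^0+te_j)-F_i(x^0)$. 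Dividing by $t$ and letting $t\downarrow 0$, the G\^{a}teaux differentiability of $F$ yields $(\nabla F(x^0))_{ij}\le 0$; since $i\ne j$ and $x^0\in\mathbb{R}^n_+$ were arbitrary, $\nabla F$ is a $Z$-matrix everywhere on $\mathbb{R}^n_+$.

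For sufficiency, assume $\nabla F(w)$ is a $Z$-matrix for every $w\in\mathbb{R}^n_+$ and take $x,y,z\in\mathbb{R}^n_+$ with $\langle x,y-z\rangle=0$. Since $\mathbb{R}^n_+$ is convex, the segment $w_s:=z+s(y-z)$, $s\in[0,1]$, stays in $\mathbb{R}^n_+$, so the fundamental theorem of calculus along it gives $F(y)-F(z)=\int_0^1\nabla F(w_s)(y-z)\diff s$, and hence
\[
\langle x,F(y)-F(z)\rangle=\int_0^1 x^{\top}\nabla F(w_s)\,(y-z)\diff s .
\]
It then suffices to show $x^{\top}M(y-z)\le 0$ for an arbitrary $Z$-matrix $M$. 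I would split $M=\mathrm{diag}(M_{11},\dots,M_{nn})-N$ with $N\ge 0$ having zero diagonal: the diagonal part contributes $\sum_i M_{ii}x_i(y_i-z_i)$, which is controlled by the relation $\langle x,y-z\rangle=0$ (in the presence of the non-negativity built into the $Z$-function hypothesis, this forces the summands $x_i(y_i-z_i)$ to vanish), while the remaining part equals $-\sum_{i\ne j}N_{ij}x_i(y_j-z_j)\le 0$ since $N_{ij}\ge 0$ and the relevant components are non-negative. Thus the integrand is $\le 0$, so $\langle x,F(y)-F(z)\rangle\le 0$ and $F$ is a $Z$-function.

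The easy half is necessity; the real work is in sufficiency, where the pointwise Jacobian condition must be globalized. Two points need care: (i) the segment of integration must stay inside $\mathbb{R}^n_+$, which is exactly why the Jacobian hypothesis is imposed only there; and (ii) one must exploit the complementarity-type relation $\langle x,y-z\rangle=0$ to annihilate the diagonal part of $\nabla F$, so that only the manifestly non-positive contribution of its off-diagonal entries survives --- I expect (ii) to be the main obstacle to a fully careful write-up.
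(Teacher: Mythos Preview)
The paper does not prove this proposition; it is quoted from \cite{LQX} without argument, so there is no in-paper proof to compare your approach against.

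That said, your sufficiency argument has a genuine gap precisely where you flag the ``main obstacle.'' Under the paper's Definition~2.3 one only has $x,y,z\in\mathbb{R}^n_+$ and $\sum_i x_i(y_i-z_i)=0$; the components $y_i-z_i$ may have either sign. Hence your claim that non-negativity ``forces the summands $x_i(y_i-z_i)$ to vanish'' is false --- take $n=2$, $x=(1,1)^{\top}$, $y=(1,0)^{\top}$, $z=(0,1)^{\top}$. The same example breaks the off-diagonal half of your decomposition: $-\sum_{i\ne j}N_{ij}x_i(y_j-z_j)$ is not $\le 0$ in general because $y_j-z_j$ can be negative. So neither piece of $x^{\top}M(y-z)$ is controlled, and the argument does not close.

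The underlying issue is that the paper's Definition~2.3, read literally, is symmetric in $y$ and $z$ and would force $\langle x,F(y)-F(z)\rangle=0$, i.e.\ $\nabla F$ diagonal on $\mathbb{R}^n_+$ --- stronger than what the proposition asserts. The standard $Z$-function notion (off-diagonal antitonicity: $u\le v$ with $u_i=v_i$ implies $F_i(u)\ge F_i(v)$) is what is actually meant in \cite{I,LQX}; under that definition your integral-representation strategy \emph{does} go through, because one may without loss of generality perturb only one coordinate at a time, and then the signs line up. Your necessity direction is correct as written.
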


\begin{definition}{\rm \cite{ZQZ}}
Let $\mathcal{A}=(a_{i_1\ldots i_m})\in \mathbb{R}^{[m,n]}$. $\mathcal{A}$ is called a $\mathcal{Z}$-tensor, if all its off-diagonal entries are non-positive.
\end{definition}

\begin{definition}{\rm \cite{ZQZ}}
Let $\mathcal{A}=(a_{i_1\ldots i_m})\in \mathbb{R}^{[m,n]}$. $\mathcal{A}$ is called an $\mathcal{M}$-tensor, if it can be written as $\mathcal{A}=s\mathcal{I}_m-\mathcal{B}$, where $\mathcal{I}_m$ is the $m$-order $n$-dimensional identity tensor, $\mathcal{B}$ is a non-negative tensor, and $s\geq\rho(\mathcal B)$. Furthermore, $\mathcal{A}$ is called a nonsingular (strong) $\mathcal{M}$-tensor if $s>\rho(\mathcal B)$. Here, $\rho(\mathcal B)$ is the spectral radius of $\mathcal{B}$.
\end{definition}

Some important properties of nonsingular $\mathcal{M}$-tensors are shown below, which are necessary in the subsequential analysis.
\begin{proposition}\label{thM}{\rm \cite{XQW}}
Let $\mathcal{A}\in \mathbb{R}^{[m,n]}$ be a $\mathcal{Z}$-tensor, then the following propositions are equivalent:
\begin{enumerate}[\rm (i)]
  \item $\mathcal{A}$ is a nonsingular $\mathcal{M}$-tensor;
  \item There exists an $x\in \mathbb{R}_{++}^n$ satisfying $\mathcal{A}x^{m-1} \in \mathbb{R}_{++}^n$;
  \item $\mathcal{A}$ is a $\mathcal{P}$-tensor.
\end{enumerate}
\end{proposition}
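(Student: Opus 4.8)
The plan is to prove (i), (ii), (iii) equivalent via the cycle (i) $\Rightarrow$ (ii) $\Rightarrow$ (iii) $\Rightarrow$ (i), with the Perron--Frobenius theory of nonnegative tensors doing most of the work.

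For (i) $\Rightarrow$ (ii), write $\mathcal{A}=s\mathcal{I}_m-\mathcal{B}$ with $\mathcal{B}$ nonnegative and $s>\rho(\mathcal{B})$, and extract the required positive vector from a Perron eigenvector of $\mathcal{B}$. When $\mathcal{B}$ is irreducible, Perron--Frobenius yields $u\in\mathbb{R}^n_{++}$ with $\mathcal{B}u^{m-1}=\rho(\mathcal{B})\,u^{[m-1]}$, where $u^{[m-1]}:=(u_1^{m-1},\dots,u_n^{m-1})^{\top}$; then $\mathcal{A}u^{m-1}=(s-\rho(\mathcal{B}))\,u^{[m-1]}\in\mathbb{R}^n_{++}$. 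For a general $\mathcal{B}$, perturb to $\mathcal{B}_\varepsilon:=\mathcal{B}+\varepsilon\mathcal{E}$ with $\mathcal{E}$ the all-ones tensor: each $\mathcal{B}_\varepsilon$ is nonnegative and irreducible and $\rho(\mathcal{B}_\varepsilon)\to\rho(\mathcal{B})$ as $\varepsilon\downarrow0$, so $s>\rho(\mathcal{B}_\varepsilon)$ for small $\varepsilon$, giving $u_\varepsilon\in\mathbb{R}^n_{++}$ with $(s\mathcal{I}_m-\mathcal{B}_\varepsilon)u_\varepsilon^{m-1}\in\mathbb{R}^n_{++}$; since $\mathcal{A}u_\varepsilon^{m-1}=(s\mathcal{I}_m-\mathcal{B}_\varepsilon)u_\varepsilon^{m-1}+\varepsilon\,\mathcal{E}u_\varepsilon^{m-1}$ and the last term is nonnegative, (ii) follows with $x=u_\varepsilon$.

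For (ii) $\Rightarrow$ (iii), let $\bar{x}\in\mathbb{R}^n_{++}$ satisfy $\mathcal{A}\bar{x}^{m-1}\in\mathbb{R}^n_{++}$ and fix a nonzero $y\in\mathbb{R}^n$. Put $c:=\max_{j}|y_j|/\bar{x}_j>0$ and $\hat{x}:=c\,\bar{x}$, so $|y|\le\hat{x}$ componentwise with equality $\hat{x}_i=|y_i|$ at some binding index $i$ (with $y_i\ne0$). From $\mathcal{A}\hat{x}^{m-1}=c^{m-1}\mathcal{A}\bar{x}^{m-1}\in\mathbb{R}^n_{++}$, the $i$-th entry reads $a_{i\cdots i}\hat{x}_i^{m-1}>\sum(-a_{ij_2\cdots j_m})\hat{x}_{j_2}\cdots\hat{x}_{j_m}\ge0$, the sum being over off-diagonal tuples with each $-a_{ij_2\cdots j_m}\ge0$ by the $\mathcal{Z}$-property; in particular $a_{i\cdots i}>0$. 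Combining this with $|y_j|\le\hat{x}_j$ and the nonpositivity of the off-diagonal entries to estimate the mixed terms of $(\mathcal{A}y^{m-1})_i$ forces $y_i(\mathcal{A}y^{m-1})_i>0$, so $\mathcal{A}$ is a $\mathcal{P}$-tensor. For (iii) $\Rightarrow$ (i) I would argue by contraposition: writing $\mathcal{A}=s\mathcal{I}_m-\mathcal{B}$ with $s$ the largest diagonal entry of $\mathcal{A}$ (so $\mathcal{B}\ge0$), failure of the nonsingular $\mathcal{M}$-tensor property forces $s\le\rho(\mathcal{B})$; a nonnegative Perron eigenvector $u\ne0$ of $\mathcal{B}$ then gives $\mathcal{A}u^{m-1}=(s-\rho(\mathcal{B}))\,u^{[m-1]}\le0$, so $u_i(\mathcal{A}u^{m-1})_i=(s-\rho(\mathcal{B}))u_i^{m}\le0$ for every $i$ with $u_i>0$, contradicting the $\mathcal{P}$-tensor property.

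I expect the main obstacle to be the binding-index estimate in (ii) $\Rightarrow$ (iii): the matrix case $m=2$ is classical, but for higher-order tensors the sign bookkeeping of the monomials $y_{j_2}\cdots y_{j_m}$ at the binding index is delicate, and one must be careful about the parity of $m$. A cleaner route may be to first prove the converse of (i) $\Rightarrow$ (ii), namely that a semipositive $\mathcal{Z}$-tensor is a nonsingular $\mathcal{M}$-tensor --- via a Collatz--Wielandt bound $\rho(\mathcal{B})\le\max_i(\mathcal{B}\bar{x}^{m-1})_i/\bar{x}_i^{m-1}<s$ after the same irreducibility/perturbation reduction --- so that the equivalence of (i) and (ii) is settled first and only one implication into (iii) remains to be supplied.
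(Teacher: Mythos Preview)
The paper does not supply its own proof of this proposition: it is quoted verbatim as a known result from \cite{XQW} (Ding--Qi--Wei), so there is nothing in the present paper to compare your argument against. Your cycle (i)$\Rightarrow$(ii)$\Rightarrow$(iii)$\Rightarrow$(i) is the standard route used in that reference and its successors; the perturbation to an irreducible $\mathcal{B}_\varepsilon$ for (i)$\Rightarrow$(ii), the Collatz--Wielandt bound for the converse, and the Perron-eigenvector contraposition for (iii)$\Rightarrow$(i) are all correct as sketched.

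One point you flag but should make sharper: the parity obstruction in (ii)$\Rightarrow$(iii) is not merely a bookkeeping nuisance---for odd $m$ the equivalence is simply \emph{false} as stated. Taking $x=e_1$ and $x=-e_1$ shows that no $\mathcal{P}$-tensor of odd order exists (since $\mathcal{A}(-e_1)^{m-1}=\mathcal{A}e_1^{m-1}$ when $m-1$ is even, forcing $a_{1\cdots1}>0$ and $a_{1\cdots1}<0$ simultaneously), whereas the identity tensor $\mathcal{I}_m$ is a nonsingular $\mathcal{M}$-tensor satisfying (i) and (ii) for every $m$. So (iii) can only be part of the equivalence when $m$ is even, and your binding-index estimate $y_i(\mathcal{A}y^{m-1})_i>0$ does go through cleanly in that case: with $|y_i|=\hat x_i$ and $a_{ij_2\cdots j_m}\le 0$ off the diagonal, one has $|a_{ij_2\cdots j_m}y_{j_2}\cdots y_{j_m}|\le(-a_{ij_2\cdots j_m})\hat x_{j_2}\cdots\hat x_{j_m}$, and the sign of the diagonal term $a_{i\cdots i}y_i^{m-1}$ matches that of $y_i$ precisely because $m-1$ is odd. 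The source \cite{XQW} treats (i)$\Leftrightarrow$(ii) for all orders; the link to (iii) appears in the even-order literature (e.g.\ \cite{DLQ}), and the paper under review is tacitly in that setting.
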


Next, we will introduce a new class of tensors called $\mathcal{KS}$-tensors, which is a subset of nonsingular $\mathcal{P}$-tensors and generalization of $\mathcal{H}^{+}$-tensors \cite{WMCW}.

\begin{definition}\label{defks}
Let $\mathcal{A}=(a_{i_1\ldots i_m})\in \mathbb{R}^{[m,n]}$ be a $\mathcal{P}$-tensor. $\mathcal{W}=(w_{i_1i_2\dots i_m})$ is a tensor with entries
\begin{equation}
  w_{i_1i_2\cdots i_m}=\left\{\begin{array}{lc}a_{i_1i_2\cdots i_m}&if\;i_1=i_2=\cdots=i_m,\\0&if\;\delta_{i_1i_2\cdots i_m}=0\;and\;a_{i_1i_2\cdots i_m}>0,\;\\a_{i_1i_2\cdots i_m}&if\;\delta_{i_1i_2\cdots i_m}=0\;and\;a_{i_1i_2\cdots i_m}\leq0.\end{array}\right.\label{1}
\end{equation}
If $\mathcal{W}$ is a non-singular (strong) $\mathcal{M}$-tensor, then $\mathcal{A}$ is called a $\mathcal{KS}$-tensor.
\end{definition}

By the above definition, we can see that any $\mathcal{KS}$-tensor $\mathcal{A}$ can be written as $\mathcal{A}=\mathcal{W}+\mathcal{N}$ where $\mathcal{W}$ is given as in Definition \ref{defks} and $\mathcal{N}$ is a non-negative tensor. And we also have the following proposition.
\begin{proposition}\label{lem2.1}
Let $\mathcal{A}\in\mathbb{R}^{[m,n]}$ be a $\mathcal{KS}$-tensor. Then $\mathcal{W}$ is the $\mathcal{Z}$ tensor with positive diagonal elements, and $\mathcal{N}$ is the non-negative tensor with zero diagonal elements.
\end{proposition}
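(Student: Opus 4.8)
The plan is to prove both assertions entrywise, directly from the defining formula (\ref{1}) for $\mathcal{W}$ in Definition~\ref{defks}, with the positivity of the diagonal of $\mathcal{W}$ being the only step that needs a genuine ingredient, namely the nonsingular $\mathcal{M}$-tensor structure.

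First I would fix an index tuple $(i_1,\ldots,i_m)$, record that $n_{i_1\cdots i_m}=a_{i_1\cdots i_m}-w_{i_1\cdots i_m}$, and split into the diagonal case $i_1=\cdots=i_m$ (i.e. $\delta_{i_1\cdots i_m}=1$) and the off-diagonal case $\delta_{i_1\cdots i_m}=0$. In the off-diagonal case, (\ref{1}) gives either $w_{i_1\cdots i_m}=0$ (when $a_{i_1\cdots i_m}>0$) or $w_{i_1\cdots i_m}=a_{i_1\cdots i_m}\le 0$ (when $a_{i_1\cdots i_m}\le 0$), so in either subcase $w_{i_1\cdots i_m}\le 0$; hence $\mathcal{W}$ has non-positive off-diagonal entries, i.e. is a $\mathcal{Z}$-tensor. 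Correspondingly $n_{i_1\cdots i_m}$ equals $a_{i_1\cdots i_m}>0$ or $0$, so $n_{i_1\cdots i_m}\ge 0$. In the diagonal case, (\ref{1}) gives $w_{ii\cdots i}=a_{ii\cdots i}$, hence $n_{ii\cdots i}=0$. This already establishes that $\mathcal{N}$ is a non-negative tensor with zero diagonal.

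Next I would prove $w_{ii\cdots i}>0$ for every $i\in[n]$. Since $\mathcal{A}$ is a $\mathcal{KS}$-tensor, $\mathcal{W}$ is a nonsingular (strong) $\mathcal{M}$-tensor, and since it is moreover a $\mathcal{Z}$-tensor by the previous paragraph, Proposition~\ref{thM} (equivalence of (i) and (ii)) supplies a vector $x\in\mathbb{R}^n_{++}$ with $\mathcal{W}x^{m-1}\in\mathbb{R}^n_{++}$. Separating the diagonal contribution gives
$$(\mathcal{W}x^{m-1})_i=w_{ii\cdots i}\,x_i^{m-1}+\sum_{(i_2,\ldots,i_m)\neq(i,\ldots,i)}w_{ii_2\cdots i_m}\,x_{i_2}\cdots x_{i_m}.$$
Every term of the sum is the product of an off-diagonal entry $w_{ii_2\cdots i_m}\le 0$ and positive numbers $x_{i_2},\ldots,x_{i_m}$, so the sum is $\le 0$; hence $w_{ii\cdots i}\,x_i^{m-1}\ge(\mathcal{W}x^{m-1})_i>0$, and since $x_i>0$ we conclude $w_{ii\cdots i}>0$. (Equivalently, writing $\mathcal{W}=s\mathcal{I}_m-\mathcal{B}$ with $\mathcal{B}$ non-negative and $s>\rho(\mathcal{B})$, one has $w_{ii\cdots i}=s-b_{ii\cdots i}\ge s-\rho(\mathcal{B})>0$, using that the spectral radius of a non-negative tensor dominates that of its principal subtensors and that the one-dimensional subtensor of $\mathcal{B}$ at index $i$ has spectral radius $b_{ii\cdots i}$.)

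These two cases plus the above computation cover all claims. The casework is purely mechanical and I expect no real obstacle; the only point to get right is the choice of characterization of nonsingular $\mathcal{M}$-tensors, and Proposition~\ref{thM}(ii) sidesteps any direct spectral-radius estimate and makes the positivity of the diagonal of $\mathcal{W}$ a one-line consequence.
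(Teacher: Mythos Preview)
Your entrywise analysis of the off-diagonal entries of $\mathcal{W}$ and of all entries of $\mathcal{N}$ is exactly the paper's argument, and it is correct. The one genuine difference is how you obtain $w_{ii\cdots i}>0$. The paper invokes the other half of Definition~\ref{defks}: since $\mathcal{A}$ is in particular a $\mathcal{P}$-tensor, each diagonal entry $a_{ii\cdots i}$ is positive (take $x=e_i$ in Definition~\ref{defp}), and then $w_{ii\cdots i}=a_{ii\cdots i}>0$ is immediate from \eqref{1}. You instead use the nonsingular $\mathcal{M}$-tensor structure of $\mathcal{W}$ together with Proposition~\ref{thM}(ii) to produce a positive vector $x$ with $\mathcal{W}x^{m-1}>0$, and then isolate the diagonal term. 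Both routes are valid; the paper's is a one-liner because the $\mathcal{P}$-tensor hypothesis is already built into the definition of a $\mathcal{KS}$-tensor, whereas your argument has the minor virtue of showing that the conclusion about $\mathcal{W}$ really only needs the ``$\mathcal{W}$ is a nonsingular $\mathcal{M}$-tensor'' part of Definition~\ref{defks} and not the $\mathcal{P}$-tensor assumption on $\mathcal{A}$ itself.
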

\begin{proof}
Since $\mathcal{A}$ is a $\mathcal{P}$-tensor, all diagonal elements of it are positive. As we know from \eqref{1}, all diagonal elements of $\mathcal{W}$ are positive. Thus, the off diagonal elements of $\mathcal{W}$ are non-positive, we know that $\mathcal{W}$ is the $\mathcal{Z}$ tensor with positive diagonal elements.

From the definition of $\mathcal{W}$, $\mathcal{N}$ is the tensor with entries
\begin{center}
$n_{i_1i_2\cdots i_m}=\left\{\begin{array}{lc}0&if\;i_1=i_2=\cdots=i_m,\\a_{i_1i_2\cdots i_m}&if\;\delta_{i_1i_2\cdots i_m}=0\;and\;a_{i_1i_2\cdots i_m}>0,\;\\0&if\;\delta_{i_1i_2\cdots i_m}=0\;and\;a_{i_1i_2\cdots i_m}\leq0.\end{array}\right.$
\end{center}
Thus $\mathcal{N}$ is the non-negative tensor with zero diagonal elements.
\end{proof}

\begin{remark}
According to Definition \ref{defks}, it is obvious that the $\mathcal{KS}$-tensor is not necessarily a $\mathcal{Z}$-tensor.
\end{remark}
Next, we give an example to show that the $\mathcal{KS}$-tensor is not necessarily a $\mathcal{Z}$-tensor.
\begin{example}
Let $\mathcal{A}$ be a 3-order 2-dimensional tensor with entries
\begin{center}
$\mathcal{A}(i,j,1)=\begin{pmatrix}1&0\\1&0\end{pmatrix}$ and $\mathcal{A}(i,j,2)=\begin{pmatrix}0&-1\\0&1\end{pmatrix}$.
\end{center}
For any non-zero vector $x=(x_1,x_2)^{\top}$, we have
\begin{center}
$\mathcal{A}x^{m-1}=\begin{pmatrix}x_1^2-x_2^2\\x_1^2+x_2^2\end{pmatrix}$.
\end{center}
Thus, $x_1(\mathcal{A}x^2)_1=x_1(x_1^2-x_2^2)$ and $x_2(\mathcal{A}x^2)_2=x_2(x_1^2+x_2^2)$. We can see that $\mathcal{A}$ is a $\mathcal{P}$-tensor. Then
$$
\begin{cases}
\mathcal{W}(i,j,1)=\begin{pmatrix}1&0\\0&0\end{pmatrix},\quad \mathcal{W}(i,j,2)=\begin{pmatrix}0&-1\\0&1\end{pmatrix},\\
\mathcal{N}(i,j,1)=\begin{pmatrix}0&0\\1&0\end{pmatrix}, \quad \mathcal{N}(i,j,2)=\begin{pmatrix}0&0\\0&0\end{pmatrix}.
\end{cases}
$$
We can find that $\mathcal{W}$ is a non-singular (strong) $\mathcal{M}$-tensor. Thus $\mathcal{A}$ is a $\mathcal{KS}$-tensor.
However, $\mathcal{A}(2,1,1)=1>0$ indicates that $\mathcal{A}$ is not a $\mathcal{Z}$-tensor.
\end{example}

\begin{remark}
A $\mathcal{Z}$-tensor is not necessarily a $\mathcal{KS}$-tensor.
\end{remark}
Similarly, we also give an example to show that a $\mathcal{Z}$-tensor is not necessarily a $\mathcal{KS}$-tensor.
\begin{example}
Let $\mathcal{A}$ be a 3-order 2-dimensional tensor with entries
\begin{center}
$\mathcal{A}(i,j,1)=\begin{pmatrix}1&-1\\0&-1\end{pmatrix}$ and $\mathcal{A}(i,j,2)=\begin{pmatrix}-2&0\\0&-1\end{pmatrix}$.
\end{center}
Obviously, $\mathcal{A}$ is a $\mathcal{Z}$-tensor.\\
For any non-zero vector $x=(x_1,x_2)^{\top}$, we have
\begin{center}
$\mathcal{A}x^{m-1}=\begin{pmatrix}x_1^2-3x_1x_2\\-x_2^2-x_1x_2\end{pmatrix}$.
\end{center}
Thus, $x_1(\mathcal{A}x^2)_1=x_1(x_1^2-3x_1x_2)$ and $x_2(\mathcal{A}x^{m-1})_2=x_2(-x_2^2-x_1x_2)$. We can see that $\mathcal{A}$ is not a $\mathcal{P}$-tensor. Then $\mathcal{A}$ is not a $\mathcal{KS}$-tensor.
\end{example}

Theorem 3.1 in \cite{WMCW} shows that a $\mathcal{Z}$-tensor is a $\mathcal{KS}$-tensor when $\mathcal{A}$ satisfies certain conditions, as shown in the following proposition.
\begin{proposition}\label{thm0}
Let $\mathcal{A}=(a_{i_1i_2\cdots i_m})\in \mathbb{R}^{[m,n]}$ be a $\mathcal{Z}$-tensor, then $\mathcal{A}$ is a $\mathcal{KS}$-tensor if and only if $\mathcal{A}$ is a non-singular (strong) $\mathcal{M}$-tensor.
\end{proposition}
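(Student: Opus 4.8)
The plan is to exploit the fact that for a $\mathcal{Z}$-tensor the auxiliary tensor $\mathcal{W}$ appearing in Definition \ref{defks} coincides with $\mathcal{A}$ itself, after which the asserted equivalence reduces to the known link between $\mathcal{P}$-tensors and non-singular $\mathcal{M}$-tensors recorded in Proposition \ref{thM}.

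First I would record the key observation: if $\mathcal{A}=(a_{i_1\cdots i_m})$ is a $\mathcal{Z}$-tensor, then every off-diagonal entry is non-positive, so the middle branch of \eqref{1} (the case $\delta_{i_1\cdots i_m}=0$ and $a_{i_1\cdots i_m}>0$) is never triggered. Consequently $w_{i_1\cdots i_m}=a_{i_1\cdots i_m}$ for every index tuple, i.e. $\mathcal{W}=\mathcal{A}$; equivalently, the non-negative part $\mathcal{N}$ in the decomposition $\mathcal{A}=\mathcal{W}+\mathcal{N}$ is the zero tensor.

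For the ``only if'' direction, assume the $\mathcal{Z}$-tensor $\mathcal{A}$ is a $\mathcal{KS}$-tensor. By Definition \ref{defks} the tensor $\mathcal{W}$ is a non-singular (strong) $\mathcal{M}$-tensor; since $\mathcal{W}=\mathcal{A}$ by the observation above, $\mathcal{A}$ itself is a non-singular (strong) $\mathcal{M}$-tensor. For the ``if'' direction, assume $\mathcal{A}$ is a $\mathcal{Z}$-tensor and a non-singular (strong) $\mathcal{M}$-tensor. Proposition \ref{thM}, implication (i)$\Rightarrow$(iii), gives that $\mathcal{A}$ is a $\mathcal{P}$-tensor, hence $\mathcal{A}$ is an admissible input in Definition \ref{defks}; and again $\mathcal{W}=\mathcal{A}$ is a non-singular (strong) $\mathcal{M}$-tensor by hypothesis, so $\mathcal{A}$ is a $\mathcal{KS}$-tensor.

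The argument is essentially definitional, so there is no serious obstacle; the only points requiring care are correctly reading off the branch of \eqref{1} that yields $\mathcal{W}=\mathcal{A}$ for $\mathcal{Z}$-tensors, and remembering that Definition \ref{defks} already presupposes the $\mathcal{P}$-tensor property, which is precisely why the equivalence of (i) and (iii) in Proposition \ref{thM} must be invoked in the ``if'' direction.
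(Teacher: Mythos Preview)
Your argument is correct. Note, however, that the paper does not actually prove this proposition: it is stated as a quotation of Theorem~3.1 in \cite{WMCW}, with no proof supplied in the present text. Your definitional argument---observing that $\mathcal{W}=\mathcal{A}$ when $\mathcal{A}$ is a $\mathcal{Z}$-tensor and then invoking Proposition~\ref{thM} for the $\mathcal{P}$-tensor requirement in the ``if'' direction---is the natural and presumably intended one.
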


In the following, we mainly consider the problem \eqref{kstcp} which the $\mathcal{KS}$-tensor satisfies the condition:
\begin{equation}
w_{ii_2\cdots i_m}+w_{i_2i\cdots i_m}+\cdots+w_{i_2\cdots i_mi}\leq -n_{ii_2\cdots i_m}-n_{i_2i\cdots i_m}-\cdots-n_{i_2\cdots i_mi}, i\neq i_m.\label{2}
\end{equation}

Next, we give an example to show that the $\mathcal{KS}$-tensor is different from the $\mathcal{Z}$-tensor  if condition \eqref{2} is satisfied.
\begin{example}\label{teshuKSnoZ}
Let $\mathcal{A}=(a_{i_1i_2i_3i_4})\in \mathbb{R}^{[4,2]}$ with entries $a_{1111}=a_{2222}=a_{1212}=1$, $a_{1221}=-1$, $a_{2112}=-0.5$ and other $a_{i_1i_2i_3i_4}=0$.
\end{example}
For any vector $x=(x_1,x_2)^{\top}$, we have
\begin{center}
$\mathcal{A}x^3=\begin{pmatrix}x_1^3\\x_2^3-0.5x_1^2x_2\end{pmatrix}$.
\end{center}
Thus, $x_1(\mathcal{A}x^3)_1=x_1^4$ and $x_2(\mathcal{A}x^3)_2=x_2^2(x_2^2-0.5x_1^2)$. By Definition \ref{defp}, $\mathcal{A}$ is a $\mathcal{P}$-tensor. It can be seen from formula \eqref{1}, $w_{1111}=w_{2222}=1$, $w_{1212}=0$, $w_{1221}=-1$, $w_{2112}=-0.5$. Obviously, $\mathcal{W}$ is a $\mathcal{Z}$-tensor. From
\begin{center}
$\mathcal{W}x^3=\begin{pmatrix}x_1(x_1^2-x_2^2)\\x_2(x_2^2-0.5x_1^2)\end{pmatrix}$,
\end{center}
there exists $x=(1.4,1.3)^{\top}>0$ such that $\mathcal{W}x^3>0$. Thus  $\mathcal{W}$ is a nonsingular $\mathcal{M}$-tensor, and then $\mathcal{A}$ is a $\mathcal{KS}$-tensor. Next, we will verify that $\mathcal{A}$ satisfies condition \eqref{2} through the discussion of different situations:
\begin{enumerate}
  \item $i=1, i_4=2$
  \begin{itemize}
    \item $i_2=1, i_3=2$: $w_{1122}+w_{1122}+w_{1212}+w_{1221}\leq-n_{1122}-n_{1122}-n_{1212}-n_{1221}$
    \item $i_2=2, i_3=1$: $w_{1212}+w_{2112}+w_{2112}+w_{2121}\leq-n_{1212}-n_{2112}-n_{2112}-n_{2121}$
    \item $i_2=1, i_3=1$: $w_{1112}+w_{1112}+w_{1112}+w_{1121}\leq-n_{1112}-n_{1112}-n_{1112}-n_{1121}$
    \item $i_2=2, i_3=2$: $w_{1222}+w_{2122}+w_{2212}+w_{2221}\leq-n_{1222}-n_{2122}-n_{2212}-n_{2221}$
  \end{itemize}
  \item $i=2, i_4=1$
  \begin{itemize}
    \item $i_2=1, i_3=2$: $w_{2121}+w_{1221}+w_{1221}+w_{1212}\leq-n_{2121}-n_{1221}-n_{1221}-n_{1212}$
    \item $i_2=2, i_3=1$: $w_{2211}+w_{2211}+w_{2121}+w_{2112}\leq-n_{2211}-n_{2211}-n_{2121}-n_{2112}$
    \item $i_2=1, i_3=1$: $w_{2111}+w_{1211}+w_{1121}+w_{1112}\leq-n_{2111}-n_{1211}-n_{1121}-n_{1112}$
    \item $i_2=2, i_3=2$: $w_{2221}+w_{2221}+w_{2221}+w_{2212}\leq-n_{2221}-n_{2221}-n_{2221}-n_{2212}$
  \end{itemize}
\end{enumerate}
It can be seen from the above analysis, $\mathcal{A}$ is a $\mathcal{KS}$-tensor instead of  a $\mathcal{Z}$-tensor.

\section{The sparse solution to $\mathcal{KS}$-tensor complementarity problem}

In this section, we consider the sparsest solution to $\mathcal{KS}$-TCP. The sparse solutions of $\mathcal{KS}$-TCP can be formulated as
  (P0)
  $$
  \begin{cases}
  \min  \parallel{x}\parallel_{0} \\
  {\rm s.t.} \mathcal{A}x^{m-1}-q\geq0, \quad x\geq0, \quad  \langle{x},\mathcal{A}x^{m-1}-q\rangle=0,
  \end{cases}
$$
where $q$ is a nonnegative vector.
\begin{remark}
When $q$ is a negative vector, $x=(0,0,\cdots,0)^\top$ must satisfy ``$x\geq 0$, $\mathcal{A}x^{m-1}-q\geq 0$, $x^\top(\mathcal{A}x^{m-1}-q)=0$'', that is, the zero vector must be the sparsest solution of {\rm TCP}, which is meaningless to study. Therefore, we set $q$ is a non-negative vector.
\end{remark}

Next, we first explore the properties of the solution set of $\mathcal{KS}$-tensor complementarity problem. In \cite{BHW}, Bai, Huang and Wang showed that the solution set of TCP($q$,$\mathcal{A}$) is nonepmty and compact for any $q\in\mathbb{R}^n$, where $\mathcal{A}$ is a $\mathcal{P}$-tensor. Now, we can get the following theorem.
\begin{theorem}\label{thm0}
$\mathcal{KS}$-{\rm TCP} \eqref{kstcp} has a nonempty and compact solution set for any $q\in\mathbb{R}^n$.
\end{theorem}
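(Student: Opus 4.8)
The plan is to reduce the statement to two facts that are already available in the excerpt: first, by Definition~\ref{defks} and \cite{WMCW}, every $\mathcal{KS}$-tensor is a nonsingular $\mathcal{P}$-tensor, and in particular a $\mathcal{P}$-tensor; second, by \cite{BHW}, $\mathrm{TCP}(q,\mathcal{A})$ has a nonempty compact solution set for any $q\in\mathbb{R}^n$ when $\mathcal{A}$ is a $\mathcal{P}$-tensor. Strictly speaking this already proves the theorem in one line, so the real work is only to present it; since it is desirable to keep the section self-contained, I would in addition record a direct argument for compactness (the part that is easiest to make transparent), invoking \cite{BHW} only for nonemptiness.

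For compactness, let $S$ denote the solution set of \eqref{kstcp}. First, $S$ is closed, since the three defining relations $\mathcal{A}x^{m-1}-q\ge 0$, $x\ge 0$, $\langle x,\mathcal{A}x^{m-1}-q\rangle=0$ involve only the continuous (polynomial) map $x\mapsto\mathcal{A}x^{m-1}$. For boundedness I would argue by contradiction: suppose $\{x^{k}\}\subset S$ with $\|x^{k}\|_{2}\to\infty$, set $t^{k}=x^{k}/\|x^{k}\|_{2}$, and pass to a subsequence with $t^{k}\to\bar t$, where $\|\bar t\|_{2}=1$ and $\bar t\ge 0$. Using the degree-$(m-1)$ homogeneity $\mathcal{A}(x^{k})^{m-1}=\|x^{k}\|_{2}^{m-1}\,\mathcal{A}(t^{k})^{m-1}$, dividing $\mathcal{A}(x^{k})^{m-1}-q\ge 0$ by $\|x^{k}\|_{2}^{m-1}$ and the complementarity identity by $\|x^{k}\|_{2}^{m}$, and letting $k\to\infty$ (note $m\ge 2$, so $\|x^{k}\|_{2}^{m-1}\to\infty$), I obtain $\mathcal{A}\bar t^{m-1}\ge 0$, $\bar t\ge 0$, $\langle\bar t,\mathcal{A}\bar t^{m-1}\rangle=0$; that is, $\bar t\ne 0$ solves the homogeneous problem $\mathrm{TCP}(0,\mathcal{A})$.

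To finish, I would show $\mathrm{TCP}(0,\mathcal{A})$ admits only the zero solution. Here I use that, since $\mathcal{A}$ is a nonsingular $\mathcal{P}$-tensor, $F(x):=\mathcal{A}x^{m-1}$ is a $P$-function on $\mathbb{R}^{n}$ (the characterization recorded after Definition~\ref{defp}); taking $y=0$ in the $P$-function inequality and using $F(0)=0$ gives $\max_{i\in[n]}x_{i}F(x)_{i}>0$ for every $x\ne 0$. But if $\bar t\ge 0$, $F(\bar t)\ge 0$ and $\langle\bar t,F(\bar t)\rangle=0$, then each term $\bar t_{i}F(\bar t)_{i}$ is nonnegative with zero sum, hence $\bar t_{i}F(\bar t)_{i}=0$ for all $i$ and $\max_{i}\bar t_{i}F(\bar t)_{i}=0$, forcing $\bar t=0$ and contradicting $\|\bar t\|_{2}=1$. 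Thus $S$ is bounded, and being closed it is compact; combined with nonemptiness (from \cite{BHW}, applicable since $\mathcal{A}$ is a $\mathcal{P}$-tensor) this establishes the theorem.

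The step I expect to be the genuine obstacle, if one insists on a fully self-contained proof, is nonemptiness: unlike compactness it does not follow from a simple homogeneity/limiting argument and really needs either the topological-degree machinery of \cite{BHW} for $P$-functions (homotoping $\mathrm{TCP}(q,\mathcal{A})$ to $\mathrm{TCP}(0,\mathcal{A})$ on a large ball, where the latter has a unique, hence nonzero-degree, zero) or an equivalent fixed-point/coercivity argument on the natural map $x\mapsto x-[\,x-(\mathcal{A}x^{m-1}-q)\,]_{+}$; I would simply cite \cite{BHW} for it. A minor point to watch in the limiting argument is the bookkeeping of the homogeneity exponents and the use of $m\ge 2$, which is what guarantees the normalizing factors tend to infinity.
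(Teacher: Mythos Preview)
Your proposal is correct and its core argument coincides with the paper's: the paper's entire proof is the one-line observation that a $\mathcal{KS}$-tensor is by definition a $\mathcal{P}$-tensor, whence the result of \cite{BHW} on $\mathcal{P}$-tensors yields a nonempty compact solution set. Your additional self-contained boundedness argument (normalizing an unbounded sequence and contradicting the $P$-function property on $\mathrm{TCP}(0,\mathcal{A})$) is sound but goes beyond what the paper records.
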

\begin{proof}
From the Definition \ref{defks}, we know that a $\mathcal{KS}$-tensor is a $\mathcal{P}$-tensor. Thus $\mathcal{KS}$-TCP \eqref{kstcp} has a nonempty and compact solution set.
\end{proof}
\begin{remark}
In \cite{WMCW}, Wang et al. proved that the $\mathcal{KS}$-tensor equations $\mathcal{A}x^{m-1}=q$ with $q>0$ always has a unique positive solution, that is, there exists $x^{*}>0$ such that $\mathcal{A}({x^{*}})^{m-1}-q=0$. Then $\mathcal{KS}$-{\rm TCP} \eqref{kstcp}
has a solution $x^{*}$, i.e., the solution set of {\rm TCP}($q$,$\mathcal{A}$) is nonepmty when $\mathcal{A}$ is a $\mathcal{KS}$-tensor.
\end{remark}
We want to get the sparse solution of the $\mathcal{KS}$-TCP. A natural question is whether the solution of $\mathcal{KS}$-TCP is a unique solution. If it has a unique solution, then it is meaningless to consider the sparse solution of it. Now, we give an example to show that when $\mathcal{A}$ is a $\mathcal{KS}$-tensor, the solution of $\mathcal{KS}$-TCP is not unique.
\begin{example}\label{noweiyi}
Consider $\mathcal{KS}$-{\rm TCP} \eqref{kstcp}. Let $\mathcal{A}=(a_{i_1i_2i_3i_4})\in \mathbb{R}^{[4,2]}$ with its entries $a_{1111}=1,a_{1112}=-2,a_{1122}=1,a_{2222}=1$ and other $a_{i_1i_2i_3i_4}=0$.
\end{example}
For any vector $x=(x_1,x_2)^{\top}$, we have
\begin{center}
$\mathcal{A}x^3=\begin{pmatrix}x_1^3-2x_1^2x_2+x_1x_2^2\\x_2^3\end{pmatrix}$.
\end{center}
Thus, $x_1(\mathcal{A}x^3)_1=x_1^4-2x_1^3x_2+x_1^2x_2^2$ and $x_2(\mathcal{A})_2=x_2^4$. By the definition of $\mathcal{P}$-tensor, we can see that $\mathcal{A}$ is a $\mathcal{P}$-tensor. And $w_{1111}=1, w_{1112}=-2, w_{1122}=0, w_{2222}=1$. Obviously, $\mathcal{W}$ is a $\mathcal{Z}$-tensor. From
\begin{center}
$\mathcal{W}x^3=\begin{pmatrix}x_1^2(x_1-2x_2)\\x_2^3\end{pmatrix}$,
\end{center}
there must be $x>0$, such that $\mathcal{W}x^3>0$. So $\mathcal{W}$ is a nonsingular $\mathcal{M}$-tensor and $\mathcal{A}$ is a $\mathcal{KS}$-tensor. \\
Taking $q=(0,1)^{\top}$, we have
\begin{center}
$\left\{\begin{array}{l}x_1\geq0\\x_2\geq0\end{array}\right.$, $\left\{\begin{array}{l}x_1^3-2x_1^2x_2+x_1x_2^2\geq0\\x_2^3-1\geq0\end{array}\right.$,
$\left\{\begin{array}{l}x_1(x_1^3-2x_1^2x_2+x_1x_2^2)=0\\x_2(x_2^3-1)=0\end{array}\right.$.
\end{center}
It is easy to see that $x=(0,1)^{\top}$ and $x=(1,1)^{\top}$ are two solutions of this example.

\begin{lemma}\label{lem3.1}{\rm \cite{LQX}}
If $F:  \mathbb{R}^{n}\rightarrow \mathbb{R}^{n}$ is a Z-function, then the following statement holds:\begin{center}
$x\in \mathbb{R}^{n}_{+},y\in \mathbb{R}^{n}_{+},\langle {x,y}\rangle=0\Rightarrow \langle{x,F(y)-F(0)}\rangle\leq0$.
\end{center}
Moreover, if $F(x)=Ax$ is a linear function, then $A$ is a $Z$-matrix, which is equivalent to the following statement:
\begin{equation}\label{3}
  x\in \mathbb{R}_+^n,y\in \mathbb{R}_+^n,\left\langle x,y\right\rangle=0\Rightarrow\left\langle x,Ay\right\rangle\leq0.
\end{equation}
\end{lemma}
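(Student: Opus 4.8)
The plan is to prove the two assertions separately, each being a short consequence of the definition of a $Z$-function together with Proposition~\ref{lem1}.

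For the first assertion I would simply specialize the defining inequality of a $Z$-function by choosing $z=0$. Indeed, if $x,y\in\mathbb{R}^{n}_{+}$ satisfy $\langle x,y\rangle=0$, then $0\in\mathbb{R}^{n}_{+}$ and $\langle x,y-0\rangle=\langle x,y\rangle=0$, so the definition of a $Z$-function applied to the triple $(x,y,0)$ gives $\langle x,F(y)-F(0)\rangle\le 0$ immediately; no computation is involved.

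For the linear case $F(x)=Ax$, I would first record that a linear map is G$\hat{a}$teaux continuously differentiable with constant Jacobian equal to $A$ (the transposition convention in $\nabla F$ is immaterial, since $A$ is a $Z$-matrix if and only if $A^{\top}$ is). Hence Proposition~\ref{lem1} yields: $F(x)=Ax$ is a $Z$-function if and only if $A$ is a $Z$-matrix. It then remains to establish the equivalence between ``$A$ is a $Z$-matrix'' and the implication \eqref{3}. For the direction $A$ a $Z$-matrix $\Rightarrow$ \eqref{3}: given $x,y\in\mathbb{R}_+^n$ with $\langle x,y\rangle=0$, all products $x_iy_i$ vanish, so in the expansion $\langle x,Ay\rangle=\sum_{i,j}x_iA_{ij}y_j$ every term with $i=j$ is zero, while for $i\neq j$ we have $A_{ij}\le 0$, $x_i\ge 0$, $y_j\ge 0$, hence $x_iA_{ij}y_j\le 0$; summing gives $\langle x,Ay\rangle\le 0$. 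For the converse, I would test \eqref{3} on standard basis vectors: for any $i\neq j$ the vectors $x=e_i$ and $y=e_j$ lie in $\mathbb{R}_+^n$ and satisfy $\langle e_i,e_j\rangle=0$, so \eqref{3} forces $A_{ij}=\langle e_i,Ae_j\rangle\le 0$, i.e.\ all off-diagonal entries of $A$ are non-positive.

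This lemma is essentially bookkeeping, so I do not expect a genuine obstacle. The only points that warrant care are the transpose convention implicit in $\nabla F$ when invoking Proposition~\ref{lem1}, and the observation that $\langle x,y\rangle=0$ with $x,y\ge 0$ kills exactly the diagonal contributions in $\langle x,Ay\rangle$ --- which is precisely what makes the $Z$-matrix property the right hypothesis here.
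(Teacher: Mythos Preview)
Your proof is correct. Note, however, that the paper does not actually supply a proof of this lemma: it is quoted from \cite{LQX} and stated without argument, then used as a tool in the proof of the subsequent theorem. So there is no proof in the paper to compare against. Your argument---specializing the $Z$-function definition to $z=0$ for the first assertion, invoking Proposition~\ref{lem1} to pass between ``$F(x)=Ax$ is a $Z$-function'' and ``$A$ is a $Z$-matrix'' in the linear case, and then verifying the equivalence with \eqref{3} by expanding $\langle x,Ay\rangle=\sum_{i,j}x_iA_{ij}y_j$ (diagonal terms killed by $x_iy_i=0$, off-diagonal terms non-positive) together with the basis-vector test $x=e_i$, $y=e_j$ for the converse---is the standard one and is complete.
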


\begin{theorem}\label{thm0}
If $\mathcal{A}$ is a $\mathcal{KS}$-tensor which satisfies \eqref{2},
then $F(x)=\mathcal{A}x^{m-1}$ is a $Z$-function.
\end{theorem}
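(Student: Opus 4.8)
The plan is to verify the definition of a $Z$-function directly: given $x,y,z\in\mathbb{R}^n_+$ with $\langle x,y-z\rangle=0$, I must show $\langle x,F(y)-F(z)\rangle\le 0$, where $F(x)=\mathcal{A}x^{m-1}$. The natural route is to reduce the tensor statement to a matrix statement by factoring the difference $\mathcal{A}y^{m-1}-\mathcal{A}z^{m-1}$ as a ``telescoping'' sum. Indeed, writing $\mathcal{A}y^{m-1}-\mathcal{A}z^{m-1}=\sum_{k=1}^{m-1}\mathcal{A}(y^{k-1}, y-z, z^{m-1-k})$ (interpreting the tensor action in the obvious multilinear way on the last $m-1$ slots), each summand is of the form $M(y,z)(y-z)$ for a matrix $M(y,z)$ whose entries are polynomials in the nonnegative vectors $y$ and $z$. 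Then $\langle x, F(y)-F(z)\rangle=\langle x, (\sum_k M_k(y,z))(y-z)\rangle$, and by Lemma \ref{lem3.1} (its matrix form, inequality \eqref{3}) it suffices to show that each matrix $M_k(y,z)$ is a $Z$-matrix, i.e. has nonpositive off-diagonal entries, since then $\langle x, M_k(y,z)(y-z)\rangle\le 0$ whenever $x\ge 0$, $y-z$ plays the role of a vector with $\langle x, y-z\rangle=0$...

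Here I should be careful: inequality \eqref{3} as stated requires $y\ge 0$ and $x\ge 0$ with $\langle x,y\rangle=0$, not merely $\langle x,y-z\rangle=0$. So instead of splitting off $y-z$ I would more carefully use the $Z$-matrix property in the form it is actually needed: if $N$ is a $Z$-matrix then for $x\ge0$ and any vector $v$ with $\langle x,v\rangle=0$ one has $\langle x, Nv\rangle\le 0$ — this follows because $\langle x, Nv\rangle=\sum_{i}x_i\sum_j N_{ij}v_j=\sum_i x_i\big(N_{ii}v_i+\sum_{j\ne i}N_{ij}v_j\big)$, and $\sum_i x_i N_{ii} v_i$ can be related to $\langle x,v\rangle=0$ only if the diagonal is constant, which it is not. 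So the cleaner path is: decompose each $M_k$ as $D_k+B_k$ where $D_k$ is diagonal (nonnegative, coming from the diagonal tensor entries) and $B_k$ has the off-diagonal contributions; the condition $\langle x,y-z\rangle=0$ kills the $D_k$ term is \emph{not} automatic either. The correct mechanism, and the one the hypothesis \eqref{2} is designed for, is that after symmetrizing and summing over $k$ the \emph{total} matrix $\sum_k M_k(y,z)$ acting through $\langle x,\cdot(y-z)\rangle$ collapses onto the partially-symmetrized coefficients $w_{ii_2\cdots i_m}+w_{i_2i\cdots i_m}+\cdots$ plus the $n$-counterparts, and \eqref{2} is exactly the assertion that these are $\le 0$ for $i\ne i_m$; the $i=i_m$ (diagonal-type) terms are handled by the constraint $\langle x,y-z\rangle=0$ combined with nonnegativity.

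Concretely, the key steps in order are: (1) expand $\langle x,\mathcal{A}y^{m-1}-\mathcal{A}z^{m-1}\rangle$ fully as a sum over multi-indices $(i,i_2,\dots,i_m)$ of $a_{ii_2\cdots i_m}$ times a difference of monomials in $y$ and $z$; (2) use the algebraic identity $\prod_{t=2}^m y_{i_t}-\prod_{t=2}^m z_{i_t}=\sum_{k=2}^m\big(\prod_{t<k}y_{i_t}\big)(y_{i_k}-z_{i_k})\big(\prod_{t>k}z_{i_t}\big)$ to linearize; (3) regroup terms so that each $(y_{i_k}-z_{i_k})$ factor is paired with a coefficient that, after renaming indices and collecting all positions where a given index $i$ appears, is precisely $w_{ii_2\cdots i_m}+\cdots+w_{i_2\cdots i_mi}+n_{ii_2\cdots i_m}+\cdots+n_{i_2\cdots i_mi}$ (using $\mathcal{A}=\mathcal{W}+\mathcal{N}$, Proposition \ref{lem2.1}); (4) split into the ``diagonal'' part where the surviving free index equals $i_m$ — here invoke $x\ge0$, $y,z\ge0$, and $\langle x,y-z\rangle=0$ together with the positivity of diagonal entries from Proposition \ref{lem2.1} to control the sign — and the ``off-diagonal'' part $i\ne i_m$, where hypothesis \eqref{2} gives nonpositivity of the coefficient and the monomial products $\prod y\cdot\prod z$ are nonnegative, so the contribution of $x_i\ge0$ times (nonpositive coefficient) times (nonnegative monomial) is $\le 0$; (5) conclude $\langle x, F(y)-F(z)\rangle\le 0$.

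The main obstacle I anticipate is step (3)–(4): the bookkeeping needed to show that the linearized sum reorganizes exactly into the symmetrized combination appearing in \eqref{2}, and in particular cleanly separating the genuinely ``diagonal'' contributions (where the $\langle x,y-z\rangle=0$ constraint must be used, since those coefficients are positive and would otherwise spoil the inequality) from the off-diagonal ones governed by \eqref{2}. A secondary subtlety is making sure the partial-symmetrization in \eqref{2} matches the symmetrization that naturally arises from summing the telescoping identity over the $m-1$ trailing index positions; I expect this to work because condition \eqref{2} is stated with exactly the cyclic-type sum $w_{ii_2\cdots i_m}+w_{i_2i\cdots i_m}+\cdots+w_{i_2\cdots i_mi}$ that the expansion produces. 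Once the reduction to a finite family of $Z$-matrix-type inequalities is in place, Lemma \ref{lem3.1} / \eqref{3} finishes the argument, and then Proposition \ref{lem1} could alternatively be cited for an independent differential-criterion check via $\nabla F(x)=(m-1)\mathcal{A}x^{m-2}$ being a $Z$-matrix on $\mathbb{R}^n_+$ under \eqref{2}, which may in fact be the shorter route and worth presenting as the primary proof.
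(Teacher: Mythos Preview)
Your primary route—verifying the $Z$-function inequality directly by telescoping $F(y)-F(z)$—does not close at step~(4). After linearizing, the ``diagonal'' contribution has the shape $\sum_i x_i(y_i-z_i)\,C_i(y,z)$, where each $C_i$ is a nonnegative polynomial in $y,z$ that genuinely depends on $i$. The single scalar constraint $\langle x,y-z\rangle=0$ cannot force this sum to be $\le 0$: already for a purely diagonal even-order tensor with unequal positive diagonal entries (e.g.\ $m=4$, $n=2$, $a_{1111}=1$, $a_{2222}=2$, all other entries zero, which is a $\mathcal{KS}$-tensor satisfying \eqref{2} vacuously) one can take $x=(1,1)$, $y=(2,0)$, $z=(1,1)$ and obtain $\langle x,y-z\rangle=0$ but $\langle x,F(y)-F(z)\rangle=5>0$. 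So the mechanism you sketch—``positivity of diagonal entries together with $\langle x,y-z\rangle=0$''—cannot control the sign. Your secondary worry is also real: the telescoping identity places the differentiated index in the trailing $m-1$ slots with the row index $i$ fixed in slot~$1$, whereas the cyclic sum in~\eqref{2} moves $i$ itself through all $m$ slots; these are not the same pattern, so \eqref{2} does not apply directly to the off-diagonal coefficients your expansion produces.

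The alternative you name in your final sentence—invoking Proposition~\ref{lem1} to reduce to showing $\nabla F(x)=(m-1)\bar{\mathcal A}x^{m-2}$ is a $Z$-matrix for every $x\in\mathbb{R}^n_+$, and then verifying this via the criterion in Lemma~\ref{lem3.1}—is precisely the paper's proof. The decisive advantage of this route is that Lemma~\ref{lem3.1} supplies the hypothesis $\langle y,z\rangle=0$ with $y,z\ge 0$, hence $y_iz_i=0$ for \emph{every} $i$; this componentwise orthogonality kills the $i=i_m$ terms outright, leaving only the off-diagonal coefficients that \eqref{2} is designed to bound. Promote the differential criterion to your primary (and only) argument, and carry out the expansion of $\langle y,(m-1)\bar{\mathcal A}x^{m-2}z\rangle$ carefully so that the resulting symmetrized combination matches the one in~\eqref{2}.
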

\begin{proof}
According to Proposition \ref{lem1}, in order to prove $F(x)=\mathcal{A}x^{m-1}$ is a Z-function, it suffices to show that for any $x\in \mathbb{R}^{n}_{+}$, $\nabla{F(x)}$ is a Z-matrix. Combining with Lemma \ref{lem3.1}, we only need to show that \eqref{3} holds with $A:=\bigtriangledown{F(x)}$ for any given $x\in \mathbb{R}^{n}_{+}$.

Suppose $y,z\in \mathbb{R}^{n}_{+}$ with $\langle{y},{z}\rangle=0$, easily we can get $y_{i}\geq0,z_{i}\geq0,y_{i}z_{i}=0, \forall{i}\in[n]$. Thus,
\begin{eqnarray*}
     &&\langle{y},\bigtriangledown{F(x)}z\rangle\\
     &=&\langle{y},\bigtriangledown{\mathcal{A}x^{m-1}}z\rangle\\
     &=& \langle{y},(m-1)\mathcal{\bar{A}}x^{m-2}z\rangle \\
   &=&(m-1)\sum_{i=1}^ny_i{\left[\bar{\mathcal{A}}x^{m-2}z\right]}_i\\
   &=&(m-1)\sum_{i=1}^ny_i\lbrack{\left(\bar{\mathcal{A}}x^{m-2}\right)}_{i1}z_1+\cdots+{\left(\bar{\mathcal{A}}x^{m-2}\right)}_{in}z_n\rbrack\\
   &=& (m-1)\sum_{i=1}^ny_i\lbrack\sum_{i_3,\cdots i_{m=1}}^n\bar{a}_{i1i_3\cdots i_m}x_{i_3}\cdots x_{i_m}z_1\\
   &+&\sum_{i_3,\cdots i_{m=1}}^n\bar{a}_{i2i_3\cdots i_m}x_{i_3}\cdots x_{i_m}z_2+\cdots+\sum_{i_3,\cdots i_{m=1}}^n\bar{a}_{ini_3\cdots i_m}x_{i_3}\cdots x_{i_m}z_n\rbrack\\
   &=& (m-1)\sum_{i=1}^n\lbrack\sum_{i_3,\cdots i_{m=1}}^n\bar{a}_{i1i_3\cdots i_m}x_{i_3}\cdots x_{i_m}z_1y_i\\
   &+&\sum_{i_3,\cdots i_{m=1}}^n\bar{a}_{i2i_3\cdots i_m}x_{i_3}\cdots x_{i_m}z_2y_i+\cdots+\sum_{i_3,\cdots i_{m=1}}^n\bar{a}_{ini_3\cdots i_m}x_{i_3}\cdots x_{i_m}z_ny_i\rbrack.\\
   &=& \overset n{\underset{i=1}{\sum y_i}}\sum_{i_{2,}\cdots,i_m=1}^n(a_{ii_2\cdots i_m}+a_{i_2i\cdots i_m}+\cdots+a_{i_2\cdots i_mi})x_{i_2}\cdots x_{i_{m-1}}z_{i_m}\\
   &=&\sum_{i=1}^n[\sum_{i_{2,}\cdots,i_m=1}^n(a_{ii_2\cdots i_m}+a_{i_2i\cdots i_m}+\cdots+a_{i_2\cdots i_mi})x_{i_2}\cdots x_{i_{m-1}}z_{i_m}y_i]\\
   &=&\sum_{i=1}^n[\underset{i_m\neq{i}}{\sum_{i_{2,}\cdots,i_m=1}^n}(a_{ii_2\cdots i_m}+a_{i_2i\cdots i_m}+\cdots+a_{i_2\cdots i_mi})x_{i_2}\cdots x_{i_{m-1}}z_{i_m}y_i\\
   &+&\underset{i_m=i}{\sum_{i_{2,}\cdots,i_m=1}^n}(a_{ii_2\cdots i_m}+a_{i_2i\cdots i_m}+\cdots+a_{i_2\cdots i_mi})x_{i_2}\cdots x_{i_{m-1}}z_{i_m}y_i]\\
   &=&\sum_{i=1}^n\underset{i_m\neq{i}}{\sum_{i_{2,}\cdots,i_m=1}^n}(a_{ii_2\cdots i_m}+a_{i_2i\cdots i_m}+\cdots+a_{i_2\cdots i_mi})x_{i_2}\cdots x_{i_{m-1}}z_{i_m}y_i.\\
\end{eqnarray*}
Furthermore, by Definition \ref{defks}, when $i\neq i_m$, we have
\begin{eqnarray*}
\begin{split}
  &a_{ii_2\cdots i_m}+a_{i_2i\cdots i_m}+\cdots+a_{i_2\cdots i_mi} \\
  =& w_{ii_2\cdots i_m}+n_{ii_2\cdots i_m}+w_{i_2i\cdots i_m}+n_{i_2i\cdots i_m}+\cdots+w_{i_2\cdots i_mi}+n_{i_2\cdots i_mi} \\
   =& w_{ii_2\cdots i_m}+w_{i_2i\cdots i_m}+\cdots+w_{i_2\cdots i_mi}+n_{ii_2\cdots i_m}+n_{i_2i\cdots i_m}+\cdots+n_{i_2\cdots i_mi} \\
   \leq& 0 ,
\end{split}
\end{eqnarray*}
where the last inequality follows from \eqref{2}.
So we have
$\langle{y},\bigtriangledown{F(x)z}\rangle\leq0$. Thus $\bigtriangledown{F(x)}$ is a $Z$-matrix, and $F(x)$ is a $Z$-function.
\end{proof}

\begin{theorem}\label{dengjia}
Let $\mathcal{A}$ be a $\mathcal{KS}$-tensor satisfying \eqref{2} and $q\in \mathbb{R}^{n}_{+}$. The following two systems are equivalent:\\
{\rm (i)}  $x\in \mathbb{R}^{n}_{+}$, $\mathcal{A}x^{m-1}-q \in \mathbb{R}^{n}_{+}$, $\left\langle x,\mathcal{A}x^{m-1}-q\right\rangle=0$;\\
{\rm (ii)}  $x\in \mathbb{R}^{n}_{+}$, $\mathcal{A}x^{m-1}-q=0$.
\end{theorem}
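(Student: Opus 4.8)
The plan is to treat the two directions separately; the implication (ii)$\Rightarrow$(i) is immediate, while (i)$\Rightarrow$(ii) carries the whole content. For (ii)$\Rightarrow$(i): if $x\in\mathbb{R}^{n}_{+}$ satisfies $\mathcal{A}x^{m-1}-q=0$, then $\mathcal{A}x^{m-1}-q\in\mathbb{R}^{n}_{+}$ trivially and $\langle x,\mathcal{A}x^{m-1}-q\rangle=\langle x,0\rangle=0$, so all three conditions in (i) hold.

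For (i)$\Rightarrow$(ii) I would fix $x$ satisfying (i) and set $r:=\mathcal{A}x^{m-1}-q$, so that $r\in\mathbb{R}^{n}_{+}$, $x\in\mathbb{R}^{n}_{+}$, and $\langle x,r\rangle=0$. The crucial point is that $(r,x)$ is exactly a complementary pair to which the $Z$-function inequality applies: since $\mathcal{A}$ satisfies \eqref{2}, $F(x)=\mathcal{A}x^{m-1}$ is a $Z$-function (established just above), so Lemma \ref{lem3.1}, applied with $r$ in the role of ``$x$'' and $x$ in the role of ``$y$'', yields $\langle r,\,F(x)-F(0)\rangle\leq 0$.

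The remaining step is a short computation. Because $m\geq 2$, every term of $\mathcal{A}0^{m-1}$ contains a zero factor, so $F(0)=0$; and $F(x)=\mathcal{A}x^{m-1}=r+q$ by the definition of $r$. Hence the inequality above becomes $\langle r,\,r+q\rangle=\|r\|_{2}^{2}+\langle r,q\rangle\leq 0$. Since $r\geq 0$ and $q\in\mathbb{R}^{n}_{+}$, the term $\langle r,q\rangle$ is nonnegative, which forces $\|r\|_{2}^{2}\leq 0$, hence $r=0$, i.e. $\mathcal{A}x^{m-1}-q=0$, which is (ii).

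I do not expect a real obstacle here; the only things to be careful about are the bookkeeping in Lemma \ref{lem3.1} (which of the two complementary vectors plays which role) and the observation that the hypothesis $q\in\mathbb{R}^{n}_{+}$ is precisely what makes $\langle r,q\rangle\geq 0$ and thereby closes the argument — in line with the earlier remark that a negative $q$ makes the zero vector a solution and trivializes the problem. It is also worth stating explicitly that $F(0)=0$ for tensors of order $m\geq 2$, since that identity is used.
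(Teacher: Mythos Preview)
Your proof is correct and is essentially the same as the paper's: both obtain $\langle \mathcal{A}x^{m-1}-q,\,\mathcal{A}x^{m-1}\rangle\leq 0$ from the $Z$-function property of $F(x)=\mathcal{A}x^{m-1}$ via Lemma~\ref{lem3.1}, then split off $\langle \mathcal{A}x^{m-1}-q,\,q\rangle\geq 0$ to force $\|\mathcal{A}x^{m-1}-q\|_2^2\leq 0$. The only difference is cosmetic: the paper states the key inequality directly ``since $\mathcal{A}$ is a $\mathcal{KS}$-tensor satisfying \eqref{2}'', whereas you spell out the invocation of Lemma~\ref{lem3.1} and the fact $F(0)=0$ explicitly.
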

\begin{proof}
Obviously, any solution to system (ii) is a solution to system (i). Next, we prove that any solution to system (i) is a solution to system (ii).\\
Let $x$ be any solution to system (i). Since $\mathcal{A}$ is a $\mathcal{KS}$-tensor satisfying \eqref{2} and $q\in \mathbb{R}^{n}_{+}$, it yields that
\begin{eqnarray}
  0 &\geq& \left\langle \mathcal{A}x^{m-1}-q,\mathcal{A}x^{m-1}\right\rangle  \nonumber\\
   &=& \left\langle \mathcal{A}x^{m-1}-q,\mathcal{A}x^{m-1}-q\right\rangle +\left\langle \mathcal{A}x^{m-1}-q,q\right\rangle\nonumber\\
   &\geq& \left\langle \mathcal{A}x^{m-1}-q,\mathcal{A}x^{m-1}-q\right\rangle \nonumber\\
   &=& \parallel\mathcal{A}x^{m-1}-q\parallel^2_2.\nonumber
\end{eqnarray}
This indicates that $\mathcal{A}x^{m-1}-q=0$, which implies that $x$ is a solution to (ii).
\end{proof}

\begin{remark}\label{re}
From  Theorem \ref{dengjia}, we know that the $(P0)$ problem can be achieved by solving the following problem (P1)
$$
\begin{cases}
\min  e^{\top}x\\
{\rm s.t.}  \mathcal{A}x^{m-1}-q=0,x\geq0,
\end{cases}
$$
where $e=(1,1,\cdots,1)^{\top}$ .
\end{remark}

\section{SQP algorithm for the sparsest solutions of $\mathcal{KS}$-TCP}
In this section, we will apply the SQP algorithm to get the sparse solution of $\mathcal{KS}$-TCP \eqref{kstcp}. The SQP algorithm is an important algorithm for solving optimization problems \cite{M,H1966,sqp43,sqp44,sqp45}.

Consider the specific form of (P1) problem
\begin{eqnarray}\label{yuanwenti}
\begin{cases}
  \min  e^{\top}x, \\
  s.t.  (\mathcal{A}x^{m-1}-q)_i=0, i\in E=\{1,2,\cdots,n\}, \\
    x\geq0.
\end{cases}
\end{eqnarray}
The Lagrange function of \eqref{yuanwenti} is
\begin{eqnarray}
  L(x,\mu,\lambda) &=& e^{\top}x-\sum_{i=1}^n\mu_i(\mathcal{A}x^{m-1}-q)_i-\sum_{i=1}^n\lambda_ix_i \nonumber \\
   &=& e^{\top}x-\lambda^{\top}x-\mu^{\top}(\mathcal{A}x^{m-1}-q),\nonumber
\end{eqnarray}
where $\mu=(\mu_1,\mu_2,\cdots,\mu_n)^{\top}$, $\lambda=(\lambda_1,\lambda_2,\cdots,\lambda_n)^{\top}$ are Lagrange multiplier vector.

After given $(x_k,\mu_k,\lambda_k)$, the constraint function is explicit and the Lagrangian function is approximated by quadratic polynomial. In the $k$th iteration of SQP algorithm, the following forms of quadratic programming subproblems
\begin{eqnarray}\label{ziwenti}
	\begin{cases}
  \min  \frac12d^{\top}B_{k}d+e^{\top}d,\nonumber\\
  s.t.  \mathcal{A}x^{m-1}-q+(m-1)(\mathcal{A}x^{m-2})^{\top}d=0, \\
    x+d\geq0, \nonumber
    \end{cases}
\end{eqnarray}
 are solved, where $B_k$ is a symmetric positive definite matrix. It is an approximation of Hessen matrix of Lagrange function. $d_k$ has a good property that it is the descend direction of many penalty functions, such as, $L_1$ penalty function.

Next, the SQP algorithm is displayed.
\begin{algorithm}
\caption{SQP algorithm for (P1) problem}\label{alg1}
\begin{algorithmic}[1]
\item[{\bf Step 0}] Choose initial point $(x_0,\mu_0,\lambda_0)\in\mathbb{R}^n\times\mathbb{R}^n\times\mathbb{R}^n$, symmetric positive definite matrix $B_0\in\mathbb{R}^{n\times n}$. Denote $h_i(x)=(\mathcal{A}x^{m-1})_i-q_i, i\in E$, $g(x)=x$ and $g(x)_{-}=max\{0,-g_i(x)\}$. Compute
   \begin{center}
    $A_0^E=\bigtriangledown h(x_o)^{\top}=(m-1)[\mathcal{A}x^{m-2}_0]^{\top}$, $A_0=
\begin{pmatrix}A_0^E\\I_n\end{pmatrix}$.
   \end{center}
Choose $\eta\in(0,1/2)$, $\rho\in(0,1)$, $0<\epsilon_1,\epsilon_2\ll{1}$. Set $k:=0$.
\item[{\bf Step 1}] Solve the following subproblem
\begin{eqnarray}\label{sfziwenti}
	\begin{cases}
\min  \frac12d^{\top}B_{k}d+e^{\top}d,\\
s.t.  h(x_k)+A_k^{E}d=0,\\
      x_k+d\geq0,
      \end{cases}
\end{eqnarray}
to get the optimal solution $d_k$.
\item[{\bf Step 2}] If $\|d_k\|_1\leq\epsilon_1$ and $\|h_k\|_1+\|(g_k)_-\|_1\leq\epsilon_2$, stop, an approximate KKT point $(x_k,\mu_k,\lambda_k)$ of the original problem is obtained.
\item[{\bf Step 3}] For a certain merit function, the penalty parameter $\sigma_k$ is chosen so that $d_k$ is the descend direction of the function at $x_k$.
\item[{\bf Step 4}] Let $m_k$ be the smallest nonnegative integer $m$, such that
\begin{center}
$\phi(x_k+\rho^{m}d_k,\sigma_k)-\phi(x_k,\sigma_k)\leq\eta\rho^{m}\bigtriangledown \phi(x_k,\sigma)^{\top}d_k$,
\end{center}
where $\phi$ is $l_1$ merit function and $\bigtriangledown \phi$ is the subgradient of $\phi$. Set $\alpha_k:=\rho^{m_k}$, $x_{k+1}:=x_k+\alpha_{k}d_k$.
\item[{\bf Step 5}] Compute
\begin{center}
$A_{k+1}^E=\bigtriangledown h(x_{k+1})^{\top}$, $A_{k+1}=\begin{pmatrix}A_{k+1}^E\\I_n\end{pmatrix}$
\end{center}
and
\begin{center}
$\begin{pmatrix}\mu_{k+1}\\\lambda_{k+1}\end{pmatrix}=[A_{k+1}A_{k+1}^{\top}]^{-1}A_{k+1}$.
\end{center}
\end{algorithmic}
\end{algorithm}
\begin{algorithm}
\begin{algorithmic}[1]
\item[{}]
\item[{\bf Step 6}] Let
\begin{center}
$s_k=\alpha_kd_k$, $y_k=\bigtriangledown_xL(x_{k+1},\mu_{k+1},\lambda_{k+1})-\bigtriangledown_xL(x_k,\mu_{k+1},\lambda_{k+1})$,
$B_{k+1}=B_k-\frac{B_ks_ks_k^{\top}B_k}{s_k^{\top}B_ks_k}+\frac{z_kz_k^{\top}}
{s_k^{\top}z_k}$,
\end{center}
where $z_k=\theta_ky_k+(1-\theta_k)B_ks_k$, $\theta_k$ is defined as
\begin{center}
$\theta_k=\left\{\begin{array}{lc}1,&s_ky_k\geq0.2s_k^{\top}B_ks_k,\\
	\frac{0.8s_k^{\top}B_ks_k}{s_k^{\top}B_ks_k-s_k^{\top}y_k},&s_k^{\top}y_k<0.2s_k^{\top}B_ks_k.\end{array}\right.$
\end{center}
\item[{\bf Step 7}] Set $k:=k+1$. Go to Step 1.
\end{algorithmic}
\end{algorithm}
\bigskip
\begin{remark}
In order to ensure the global convergence of the SQP method, a merit function is usually used to determine the search step size. For example, the objective function, the penalty function and the augmented Lagrange function can be used as merit function to measure the quality of one-dimensional search. In Step 3 of Algorithm 1, if we select $l_1$ merit function
\begin{center}
$\phi(x,\sigma)=f(x)+\sigma^{-1}[\|h(x)\|_1+\|g(x)_{-}\|_1]$,
\end{center}
where $f(x)=e^{\top}x$, let
\begin{center}
$\tau=\max\{\|\mu_k\|,\|\lambda_k\|\}$.
\end{center}
If $\delta>0$ is selected arbitrarily, the correction rule of defining penalty parameter is
\begin{center}
$\sigma_k=\left\{\begin{array}{lc}\sigma_{k-1},&\sigma_{k-1}^{-1}\geq\tau+\delta\\(\tau+2\delta)^{-1},&\sigma_{k-1}^{-1}<\tau+\delta\end{array}\right.$ .
\end{center}
\end{remark}

Using the KKT condition, \eqref{sfziwenti} is equivalent to
\begin{equation}\label{H1}
  H_1(d,\mu,\lambda)=B_kd-(A_k^E)^{\top}\mu-I_n\lambda+e=0.
\end{equation}
\begin{equation}\label{H2}
  H_2(d,\mu,\lambda)=h(x_k)+A_k^Ed=0.
\end{equation}
\begin{equation}\label{hubu}
  \lambda\geq0, g(x_k)+d\geq0, \lambda^{\top}[g(x_k)+d]=0.\nonumber
\end{equation}
Define
$\Phi(\varepsilon,d,\lambda)=(\phi_1(\varepsilon,d,\lambda),\phi_2(\varepsilon,d,\lambda),\cdots,\phi_m(\varepsilon,d,\lambda))^{\top}$,
where
\begin{center}
$\phi_i(\varepsilon,d,\lambda)=\lambda_i+[g_i(x_k)+d_i-\sqrt{\lambda_i^2+[g_i(x_k)+d_i]^2+2\varepsilon^2}$,
\end{center}

Let $z=(\varepsilon,d,\mu,\lambda)\in \mathbb{R}_+\times \mathbb{R}^n\times \mathbb{R}^n\times \mathbb{R}^n$. Then equations \eqref{H1}-\eqref{H2} are equivalent to
\begin{equation}\label{HZ}
  H(z):=H(\varepsilon,d,\mu,\lambda)=\begin{pmatrix}\varepsilon\\H_1(d,\mu,\lambda)\\H_2(d,\mu,\lambda)\\\Phi(\varepsilon,d,\lambda)\end{pmatrix}=0,\nonumber
\end{equation}
where $v=\bigtriangledown_\varepsilon \Phi(\varepsilon,d,\lambda)=(v_1,\cdots,v_m)^{\top}$, $v_i$ is determined by the following formula
\begin{equation}\label{vi}
  v_i=-\frac{2\varepsilon}{\sqrt{\lambda_i^2+[g_i(x_k)+d_i]^2+2\varepsilon^2}}\nonumber
\end{equation}
and $D_1(z)={\rm diag}(a_1(z),\cdots,a_m(z))$, $D_2(z)={\rm diag}(b_1(z),\cdots,b_m(z))$, where $a_i(z)$ and $b_i(z)$ are determined by the following formulae
$$
 \begin{cases}
  a_i(z)=1-\frac{\lambda_i}{\sqrt{\lambda_i^2+[g_i(x_k)+d_i]^2+2\varepsilon^2}},\\
  b_i(z)=1-\frac{g_i(x_k)+(A_k^I)_id}{\sqrt{\lambda_i^2+[g_i(x_k)+d_i]^2+2\varepsilon^2}}.\\
\end{cases}
$$
Given the parameter $\gamma\in (0,1)$, we define a nonnegative function
\begin{equation}\label{betaz}
  \beta(z)=\gamma\|H(z)\| \min\{1,\|H(z)\|\}.\nonumber
\end{equation}

Now, we give a smooth Newton method for solving the subproblem \eqref{sfziwenti}.
\begin{algorithm}
\caption{}\label{alg1}
\begin{algorithmic}[1]
\item[{\bf Step 0}] Choose $\rho,\eta\in (0,1)$, $\epsilon \geq0$ and $(d_0,\mu_0,\lambda_0)\in \mathbb{R}^n\times\mathbb{R}^n\times\mathbb{R}^n$. Let $z_0=(\varepsilon_0,d_0,\mu_0,\lambda_0)$, $\bar{z}=(\varepsilon_0,0,0,0)$. Choose $\gamma\in(0,1)$, such that $\gamma\mu_0<1$ and $\gamma\|H(z_0)\|<1$. Set $j:=0$.

\item[{\bf Step 1}] If $\|H(z^j)\|=0$, stop. Otherwise, compute $\beta_j=\beta(z_j)$.

\item[{\bf Step 2}] Solve the following equation
\begin{center}
$H(z_j)+H'(z_j)\Delta z_j=\beta_j\bar{z}$
\end{center}
to get $\Delta z_j=(\Delta\epsilon_j,\Delta d_j,\Delta \mu_j,\Delta \lambda_j)$

\item[{\bf Step 3}] Let $m_j$ be the smallest nonnegative integer satisfying
    \begin{center}
    $\|H(z_j+\rho^{m_j}\Delta z_j)\|\leq[1-\sigma(1-\beta\mu_0)\rho^{m_j}]\|H(z_j)\|$.
    \end{center}
Set $\alpha_j:=\rho^{m_j}$, $z_{j+1}=z_j+\alpha_j\Delta z_j$.

\item[{\bf Setp 4}] Set $j:=j+1$. Go to Step 1.
\end{algorithmic}
\end{algorithm}
\bigskip

From the convergence results for SQP algorithm in \cite{M,sqp43} and Remark \ref{re}, we get the following theorem.
\begin{theorem}\label{thm0}
Suppose that \eqref{2} holds and
\begin{center}
$m\|d\|_2^2\leq d^{\top}B_kd\leq M\|d\|_2^2$
\end{center}
holds for constants $m,M>0$ with all $k$ and $d\in \mathbb{R}^n$. If $\|\lambda_k\|_\infty\leq\sigma$ holds for all $k$, then any accumulation point of the point sequence ${x_k}$ generated by Algotighm 1 is the KKT point of (P1) problem, which is also the sparse solution of $\mathcal{KS}$-{\rm TCP} \eqref{kstcp}.
\end{theorem}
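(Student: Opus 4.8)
The plan is to reduce the statement to the standard global convergence theory for line-search SQP methods and then transfer the conclusion from the auxiliary problem (P1) back to $\mathcal{KS}$-TCP \eqref{kstcp}. Algorithm 1 is exactly a line-search SQP iteration for the smooth nonlinear program \eqref{yuanwenti}: the objective $e^\top x$ and the constraint maps $h_i(x)=(\mathcal{A}x^{m-1})_i-q_i$, $g(x)=x$ are polynomials, hence $C^\infty$; the subproblem \eqref{sfziwenti} is the usual quadratic model with Hessian surrogate $B_k$; Step 4 is an Armijo line search on the $l_1$ merit function $\phi(x,\sigma)$; Step 5 produces least-squares multiplier estimates; and Step 6 is the damped BFGS update. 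I would first record that, since $q\in\mathbb{R}^n_+$ and \eqref{2} holds, the equivalence theorem (Theorem \ref{dengjia}) together with Remark \ref{re} shows that the feasible set of (P1) coincides with the solution set of $\mathcal{KS}$-TCP \eqref{kstcp}, which is nonempty and compact; hence (P1) is well posed and its minimizers are precisely the sparsest solutions of \eqref{kstcp}.

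The core of the proof is to check that the hypotheses of the SQP convergence theorems of \cite{M,sqp43} are met and then quote their conclusion. I would verify: (i) smoothness of the data, immediate as above; (ii) uniform positive definiteness and boundedness of the matrices $B_k$, which is precisely the assumed two-sided bound $m\|d\|_2^2\le d^\top B_k d\le M\|d\|_2^2$ and in particular makes each subproblem \eqref{sfziwenti} a strictly convex QP with a unique solution $d_k$; (iii) boundedness of the multiplier sequences, where $\{\lambda_k\}$ is bounded by the hypothesis $\|\lambda_k\|_\infty\le\sigma$ and $\{\mu_k\}$ is bounded because the Step 5 formula uses $A_{k+1}$, which has full column rank (it contains the block $I_n$), while $\nabla h$ stays bounded on the compact region containing the iterates; (iv) boundedness of the penalty parameters $\sigma_k$, which follows from the update rule in the associated Remark since $\tau=\max\{\|\mu_k\|,\|\lambda_k\|\}$ is bounded by (iii); (v) finite termination of Step 4, since the penalty-parameter rule is designed so that $d_k$ is a descent direction of $\phi(\cdot,\sigma_k)$ at $x_k$. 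With these in place, the cited theory yields that the step sizes $\alpha_k$ are bounded away from zero along any convergent subsequence and that every accumulation point $x^*$ of $\{x_k\}$ is a KKT point of (P1).

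Finally I would close the loop: $x^*$ is feasible for (P1), hence by Theorem \ref{dengjia} it solves $\mathcal{KS}$-TCP \eqref{kstcp}, and by Remark \ref{re} the KKT/optimal points of (P1) are the sparse solutions sought. The step I expect to be the main obstacle is the rigorous justification of (iii)–(v), i.e.\ making the appeal to the SQP convergence theorem airtight: one must ensure that the subproblems \eqref{sfziwenti} remain consistent and that a constraint qualification (LICQ or MFCQ) holds along the iterates and at $x^*$, because the equality constraints $\mathcal{A}x^{m-1}-q=0$ are genuinely nonlinear. Here I would exploit the $\mathcal{KS}$-structure, namely the splitting $\mathcal{A}=\mathcal{W}+\mathcal{N}$ with $\mathcal{W}$ a nonsingular $\mathcal{M}$-tensor of positive diagonal (Proposition \ref{lem2.1}) together with $x_k\ge0$, to argue that $A_k^E=(m-1)[\mathcal{A}x_k^{m-2}]^\top$ together with the gradients of the active bound constraints has full rank; this keeps the multiplier estimates well defined and uniformly bounded and lets the merit-function machinery go through, with all estimates tied together on the compact feasible neighbourhood supplied by compactness of the $\mathcal{KS}$-TCP solution set.
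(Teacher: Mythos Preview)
Your proposal is correct and follows essentially the same approach as the paper: invoke the standard SQP global-convergence results from \cite{M,sqp43} under the uniform positive-definiteness bound on $B_k$ and the multiplier bound, and then use Remark \ref{re} (via Theorem \ref{dengjia}) to identify KKT points of (P1) with sparse solutions of $\mathcal{KS}$-TCP \eqref{kstcp}. In fact the paper gives no proof beyond citing those references and Remark \ref{re}, so your verification of hypotheses (i)--(v) and your discussion of constraint qualification are considerably more detailed than what the paper itself supplies.
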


\section{Numerical examples}
In this section, we apply Algorithm 1 to solve the sparse solution of $\mathcal{KS}$-TCP. In our tests, the parameters used in Algorithm 1 are chosen as $\rho=0.5$, $\beta=0.1$, $\sigma=0.8$, $\varepsilon_1=10^{-6}$ and $\varepsilon_2=10^{-5}$. Let $B_0$ be an identity matrix. We implement all examples in MATLAB R2019b and run the codes on a PC with a 1.80GHz CPU and 16.00GB of RAM.
\begin{example}\label{a}
Consider $\mathcal{KS}$-{\rm TCP} \eqref{kstcp} with tensor $\mathcal{A}=(a_{i_1i_2i_3i_4})\in \mathbb{R}^{[4,2]}$ and $q=(0,1)^{\top}$, where $\mathcal{A}$ with entries $a_{1111}=1$, $a_{2222}=8$, $a_{1112}=-2$ and other $a_{i_1i_2i_3}=0$.
\end{example}
The numerical results are given in Table 1. We use the command ``rand" in MATLAB to generate the initial point $x_0$, $\lambda_0$, $\mu_0$.  We can get the optimal solution $x^*=(0.0000,0.5000)^{\top}$. In Table \ref{tab:1}, $\mu$ and $\lambda$ are multipliers of equality constraints and inequality constraints respectively, ``Iter" denotes the number of iterations.

\begin{table}[h]
\begin{center}
	\caption{The numerical results for Example \ref{a}}
	\label{tab:1}       
	\begin{tabular}{cccccc}
\hline \hline \noalign{\smallskip}
	 $\mu$ & $\lambda$ & Iter & $x^*$ \\
\hline
 (-2.9303,0.0000)$^{\top}$ & (-0.6207,-0.0005)$^{\top}$ & 17 &(0.0000,0.5000)$^{\top}$\\
(-4.7808,0.0000)$^{\top}$ & (-0.2610,-0.6201)$^{\top}$ & 19 & (0.0000,0.5000)$^{\top}$ \\
 (-3.1071,-0.0000)$^{\top}$ & (-0.3931,-0.0003)$^{\top}$ & 17& (0.0000,0.5000)$^{\top}$  \\
 (-3.8231,0.0000)$^{\top}$& (-0.3192,-0.0004)$^{\top}$ &17 & (0.0000,0.5000)$^{\top}$ \\
  (-3.3756,0.0000)$^{\top}$ & (-0.3400,-0.0001)$^{\top}$ &15 &(0.0000,0.5000)$^{\top}$ \\
\noalign{\smallskip}\hline
	\end{tabular}
\end{center}
\end{table}

\begin{example}\label{b}
Consider $\mathcal{KS}$-{\rm TCP} \eqref{kstcp}, where $\mathcal{A}$ is taken from Example \ref{teshuKSnoZ}. Let $q=(0,1)^{\top}$.
\end{example}
We randomly generate the initial point $x_0$, $\lambda_0$, $\mu_0$ through the command ``rand" in MATLAB. We get the optimal solution $x^*=(0.0000,1.0000)^{\top}$. The detailed numerical results are shown in Table \ref{tab:2}, where $\mu$ and $\lambda$ are multipliers of equality constraints and inequality constraints respectively. ``Iter" denotes the number of iterations.

\begin{table}[h]
\begin{center}
	\caption{The numerical results for Example \ref{b}}
	\label{tab:2}       
	\begin{tabular}{cccccc}
\hline \hline \noalign{\smallskip}
	 $\mu$ & $\lambda$ & Iter & $x^*$ \\
\hline
		(5.2383,0.0000)$^{\top}$ & (-0.2255,-0.0027)$^{\top}$ & 29 &(0.0000,1.0000)$^{\top}$\\
(4.1050,0.0000)$^{\top}$ & (-0.2572,-0.0028)$^{\top}$ & 28 & (0.0000,1.0000)$^{\top}$ \\
 (8.2951,0.0000)$^{\top}$ & (-0.1838,-0.0027)$^{\top}$ & 27& (0.0000,1.0000)$^{\top}$  \\
 (4.2080,0.0000)$^{\top}$& (-0.2535,-0.0028)$^{\top}$ &31 & (0.0000,1.0000)$^{\top}$ \\
  (4.3838,0.0000)$^{\top}$ & (-0.2484,-0.0027)$^{\top}$ &29 &(0.0000,1.0000)$^{\top}$ \\
  \noalign{\smallskip}\hline
	\end{tabular}
\end{center}
\end{table}

\begin{example}\label{c}
Consider $\mathcal{KS}$-{\rm TCP} \eqref{kstcp} with tensor $\mathcal{A}\in\mathbb{R}^{[6,3]}$ and $q=(0,1,1)^{\top}$, where $\mathcal{A}$ with entries $a_{111111}=1=a_{222222}=1=a_{333333}=1$, $a_{123211}=-1,a_{231121}=-2$ and other $a_{i_1i_2i_3i_4i_5i_6}=0$.
\end{example}
Similarly, we use the command ``rand" to randomly generate the initial point $x_0, \lambda_0, \mu_0\in \mathbb{R}^3$.  We show the numerical results in Table \ref{tab:3}.

\begin{table}[h]\footnotesize
\begin{center}
	\caption{The numerical results for Example \ref{c}}
	\label{tab:3}       
	\begin{tabular}{cccccc}
\hline \hline \noalign{\smallskip}
	 $\mu$ & $\lambda$ &  Iter & $x^*$ \\
\hline
	(-3.1351,0.0000,0.0000)$^{\top}$ & (-0.2733,-0.0000,-0.0000)$^{\top}$ & 17 &(0.0000,1.0000,1.0000)$^{\top}$\\
(-3.7267,0.0000,0.0000)$^{\top}$ & (-0.2162,-0.0000,-0.0000)$^{\top}$ & 17 & (0.0000,1.0000,1.0000)$^{\top}$ \\
 (-4.3449,0.0000,0.0000)$^{\top}$ & (-0.2867,-0.0000,-0.0000)$^{\top}$ & 19& (0.0000,1.0000,1.0000)$^{\top}$  \\
 (-2.7719,0.0000,0.0000)$^{\top}$& (-0.2158,-0.0000,-0.0000)$^{\top}$ &17 & (0.0000,1.0000,1.0000)$^{\top}$ \\
  (-2.8052,0.0000,0.0000)$^{\top}$ & (-0.2567,-0.0000,-0.0000)$^{\top}$ &50 &(0.0000,1.0000,1.0000)$^{\top}$ \\
	\noalign{\smallskip}\hline
	\end{tabular}
\end{center}
\end{table}

\begin{example}\label{d}
Consider $\mathcal{KS}$-{\rm TCP} \eqref{kstcp} with tensor $\mathcal{A}\in\mathbb{R}^{[4,4]}$ and $q=(0,1,1,0)^{\top}$, where $\mathcal{A}$ with entries $a_{1111}=a_{2222}=2$, $a_{3333}=a_{4444}=3$, $a_{1432}=-2,a_{3143}=-5$ and other $a_{i_1i_2i_3i_4}=0$.
\end{example}
We perform Algorithm 1 to solve this example with 50 random initial point $x_0, \lambda_0, \mu_0\in \mathbb{R}^4$ uniformly distributed in (0,1). The successive rate is 64\%. We can get the optimal solution $x^*=(0.0000,0.7937,0.6934,0.0000)^{\top}$. We select some numerical results shown in Table \ref{tab:4} with 5 initial points.
\begin{table}[h]\scriptsize
\begin{center}
	\caption{The numerical results for Example \ref{d}}
	\label{tab:4}       
	\begin{tabular}{cccccc}
\hline \hline \noalign{\smallskip}
	 $\mu$ & $\lambda$ &  Iter & $x^*$ \\
\hline
	(0.0000,0.0000,0.0000,1.8544)$^{\top}$ & (-0.5571,-0.0253,-0.0259,0.0060)$^{\top}$ & 82 &(0.0000,0.7937,0.6934,0.0000)$^{\top}$\\
(0.0000,0.0000,0.0000,3.2649)$^{\top}$ & (-0.4879,-0.0000,-0.0000,0.0000)$^{\top}$ & 91 & (0.0000,0.7937,0.6934,0.0000)$^{\top}$ \\
 (0.0000,0.0000,0.0000,1.4169)$^{\top}$ & (-0.2952,0.0000,0.0000,-0.0000)$^{\top}$ & 98& (0.0000,0.7937,0.6934,0.0000)$^{\top}$  \\
 (0.0000,0.0000,0.0000,4.9592)$^{\top}$& (-0.1125,-0.0031,-0.0117,0.0016)$^{\top}$ &80 & (0.0000,0.7937,0.6934,0.0000)$^{\top}$ \\
  (0.0000,0.0000,0.0000,1.6343)$^{\top}$ & (-0.5337,-0.0000,-0.0000,0.0000)$^{\top}$ &79 &(0.0000,0.7937,0.6934,0.0000)$^{\top}$ \\
\noalign{\smallskip}\hline
	\end{tabular}
\end{center}
\end{table}

\begin{example}\label{e}
Consider $\mathcal{KS}$-{\rm TCP} \eqref{kstcp} with tensor $\mathcal{A}\in\mathbb{R}^{[10,9]}$ and $q=(0,0,\cdots,0,1)^{\top}$, where $\mathcal{A}$ with entries $a_{ii\cdots i}=1$, $a_{2677842556}=-3$ and other $a_{i_1i_2\cdots i_{10}}=0$.
\end{example}
Similarly, we use the command ``rand" to randomly generate the initial point $x_0, \lambda_0, \mu_0\in \mathbb{R}^9$. We can get the optimal solution
\begin{center}
$x^*=(0.0000,0.0000,0.0000,0.0000,0.0000,0.0000,0.0000,0.0000,1.0000)^{\top}$.
\end{center}
We show the numerical results in Table \ref{tab:5}.

\begin{table}[h]\scriptsize
\begin{center}
	\caption{The numerical results for Example \ref{e}}\label{tab:5}
	\begin{tabular}{cccccc}
\hline \hline \noalign{\smallskip}
	 $\mu$ & $\lambda$ &  Iter & $x^*$ \\
\hline
	$(0.0000,\cdots,3.8550,0.0000)^{\top}_{9\times1}$ & $(-0.2720,\cdots,-0.1561,0.0000)^{\top}_{9\times1}$ & 122 &$(0.0000,\cdots,0.0000,1.0000)^{\top}_{9\times1}$\\
$(-0.0049,\cdots,0.0219,0.0000)^{\top}_{9\times1}$ & $(-0.3313,\cdots,-0.2868,0.0000)^{\top}_{9\times1}$ & 110 & $(0.0000,\cdots,0.0000,1.0000)^{\top}_{9\times1}$ \\
 $(0.0000,\cdots,0.0000,0.0000)^{\top}_{9\times1}$ & $(0.0052,\cdots,0.0060,-0.4002)^{\top}_{9\times1}$ & 178& $(0.0000,\cdots,0.0000,1.0000)^{\top}_{9\times1}$  \\
 $(0.0055,\cdots,0.0000,0.0000)^{\top}_{9\times1}$& $(-0.3611,\cdots,-0.0939,0.0000)^{\top}_{9\times1}$ &134 & $(0.0000,\cdots,0.0000,1.0000)^{\top}_{9\times1}$ \\
  $(0.0000,\cdots,0.0000,0.0000)^{\top}_{9\times1}$ & $(-0.4377,\cdots,-0.5240,0.0000)^{\top}_{9\times1}$ &113 &$(0.0000,\cdots,0.0000,1.0000)^{\top}_{9\times1}$ \\
\noalign{\smallskip}\hline
	\end{tabular}
\end{center}
\end{table}

From the numerical results in Tables \ref{tab:1}-\ref{tab:5}, we can see that the SPQ algorithm can find the sparse solution of $\mathcal{KS}$-{\rm TCP} problems effectively.

\section{Conclusions}
$\mathcal{KS}$-{\rm TCP} is presented in this paper, we also investigate the sparse solution of it. Based on the properties of  $\mathcal{KS}$-tensor, we show that solving the sparse solution of $\mathcal{KS}$-TCP problem is equivalent to solve the optimization problems with general constraints. We transform the sparse solution of the $\mathcal{KS}$-TCP problem into a polynomial programming problem with linear objective function due to the property of $l_0$ norm. We apply the SQP algorithm to solve it. From the numerical results, it can be seen that SQP algorithm can effectively find the sparse solution of $\mathcal{KS}$-TCP problems.
\\

\end{document}